\newcommand{\figdir}{-figs/}
\pgfplotsset{compat=newest,
    width=12cm,
    height=7cm,
    every axis plot/.append style={thick},
}
\theoremstyle{plain}
\newtheorem{theorem}{Theorem}[section]
\newtheorem{proposition}[theorem]{Proposition}
\newtheorem{lemma}[theorem]{Lemma}
\theoremstyle{definition}
\newtheorem{assumption}[theorem]{Assumption}
\theoremstyle{remark}
\newtheorem{remark}[theorem]{Remark}
\newtheorem{example}{Example}[section] 
\renewcommand*{\backref}[1]{\ifx#1\relax \else Page #1 \fi}
\renewcommand*{\backrefalt}[4]{%
  \ifcase #1 \footnotesize{(Not cited.)}%
  \or        \footnotesize{(Cited on page~#2.)}%
  \else      \footnotesize{(Cited on pages~#2.)}%
  \fi
}
\newcommand{\mbXY}{{\mathbf {XY}}}
\newcommand{\rd}{\mathrm{d}}
\newcommand{\ide}{\mathbf{I}}
\def\Rd{{\mathbb{R}^d}}
\crefname{Inequality}{Inequality}{Inequalities}
\author{Bo Yuan\thanks{Georgia Institute of Technology. \texttt{byuan48@gatech.edu} }
\and Jiaojiao Fan \thanks{Georgia Institute of Technology. \texttt{jiaojiaofan@gatech.edu}} 
\and Jiaming Liang\thanks{Yale University. \texttt{jiaming.liang@yale.edu}}
\and Andre Wibisono\thanks{Yale University. \texttt{andre.wibisono@yale.edu} }
\and Yongxin Chen\thanks{Georgia Institute of Technology. \texttt{yongchen@gatech.edu}}
}
\begin{document}

\title{On a Class of Gibbs Sampling over Networks}
\maketitle

\begin{abstract}
    We consider the sampling problem from a composite distribution whose potential (negative log density) is $\sum_{i=1}^n f_i(x_i)+\sum_{j=1}^m g_j(y_j)+\sum_{i=1}^n\sum_{j=1}^m\nicefrac{\sigma_{ij}}{2\eta} \Vert x_i-y_j \Vert^2_2$ where each of $x_i$ and $y_j$ is in $\Rd$, $f_1, f_2, \ldots, f_n, g_1, g_2, \ldots, g_m$ are strongly convex functions, and $\{\sigma_{ij}\}$ encodes a network structure.
    % motivated by the task of drawing samples over a network in a distributed manner. 
    Building on the Gibbs sampling method, we develop an efficient sampling framework for this problem when the network is a bipartite graph. More importantly, we establish a non-asymptotic linear convergence rate for it. This work extends earlier works that involve only a graph with two nodes \cite{lee2021structured}. To the best of our knowledge, our result represents the first non-asymptotic analysis of a Gibbs sampler for structured log-concave distributions over networks.
    Our framework can be potentially used to sample from the distribution 
    $ \propto \exp(-\sum_{i=1}^n f_i(x)-\sum_{j=1}^m g_j(x))$ in a distributed manner. 
\end{abstract}

\section{Introduction}\label{section:introduction}
%\chen{the introduction looks good. The last two paragraphs need more work.}

Sampling has been increasingly important in the areas of  computer science and Bayesian inference as it is often the critical element for parameter estimations, simulations, and numerical approximations. Designing and analysis of sampling for log-concave distributions whose negative log density is convex have become an active research field since the randomized convex body volume approximation algorithm proposed by ~\citet{dyer1991random}. Depending on various structured distributions including non-convex potentials and associated efficient sampling algorithms, recently a large number of non-asymptotic results are established under different scenarios, \textit{e.g.},  Langevin Monto Carlo ~\citep{dalalyan2017theoretical,cheng2018underdamped,dalalyan2018sampling,li2021sqrt,zhang2023improved}, proximal samplers~\citep{lee2021structured, CheCheSalWib22,gopi2022private,fan2023improved,altschuler2023faster}, lower bounds of convergence rates~\citep{chewi2022query,chewi2023fisher,chewi2023query}, etc.

In this work, we establish a linear convergence rate of a Gibbs sampler for the following unnormalized target distribution $\propto$
\begin{equation}\label{eq:target}
    \exp\left(-\sum_{i=1}^n f_i(x_i)-\sum_{j=1}^m g_j(y_j)-\sum_{i=1}^n\sum_{j=1}^m\frac{\sigma_{ij}}{2\eta} \Vert x_i-y_j \Vert^2_2\right).
\end{equation}
% \chen{find a better position for this sentence? Due to this formulation, Gibbs Sampling considered in this work can also be potentially used to realize a fully distributed algorithm for sampling as
% mentioned in the abstract.} 
Here, both $f_i$ and $g_j$ functions are strongly convex (see Section \ref{subsection:background} for
definitions), and $x_i$ and $y_j$ represent random variables on vertices on the underlying network. Our formalization indicates the network considered in this work is a bipartite graph where the two disjoint sets are expressed as $\{x_i\}_{i=1}^n$ and $\{y_j\}_{j=1}^m$, respectively. In the expression, the parameter $\sigma_{ij}$ encodes the graph structure, \textit{i.e.}, $0<\sigma_{ij}\le 1$ if there is an edge between vertex $x_i$ and vertex $y_j$. Otherwise, $\sigma_{ij}=0$. We also utilize a positive coefficient $\eta$ to control the dependency between variables.

The structured sampling problem \eqref{eq:target} has potential applications in the fields of graphical models, distributed sampling, federated learning, etc. For instance, the joint distribution over Hidden Markov models or spanning trees with edge potentials being Gaussian distributions can be formulated as our target distribution, where $\nicefrac{\eta}{\sigma_{ij}}\ide$ is the covariance matrix of edge potential for the edge between vertices $x_i$ and $y_j$. Our analysis can be straightforwardly generalized to the case that the covariance matrix is not diagonal. Other
applications include estimation problems for robotics where the quadratic terms correspond to
odometry measurements and the other terms $f_i,g_j$ correspond to other measurements, \textit{e.g.}, perception.

In proximal samplers, for a target distribution $\exp(-h(x))$, a Gibbs sampling framework is utilized to sample from the joint distribution $\exp(-h(x)-\nicefrac{1}{2\eta}\Vert x-y \Vert^2_2)$. Since the $X$-marginal distribution of the joint one is exactly $\exp(-h(x))$, it is sufficient to generate samples from the joint one efficiently. In brief, as the conditional distribution $\exp(-h(x)-\nicefrac{1}{2\eta}\Vert x-y_0 \Vert^2_2)$ for any given $y_0$ is a better-conditioned distribution than the target, the proximal sampler can be proved to have  stronger convergence rate under various assumptions on $h(x)$ and implementations of sampling from the conditional measure~\citep{CheCheSalWib22}. To improve the performance of proximal samplers, the key is to  efficiently implement the sampler for conditional distributions, namely, Restricted Gaussian Oracle (RGO) ~\citep{lee2021structured}. Among recent works in this direction, \citet{gopi2022private,fan2023improved} utilized an approximate Rejection sampling framework and~\citet{altschuler2023faster} adopted a well-designed warm start. The proximal samplers achieve state-of-the-art results in a variety of settings of sampling~\citep{fan2023improved,altschuler2023faster}. Our target is a generalization of both proximal samplers and the distributed variable splitting MCMC~\citep{vono2022efficient} for large-scale Bayesian inference problems. Similarly, in our case, if $n=m=1$, our target \eqref{eq:target} given a  sufficiently small $\eta$ approximates a lower-dimensional composite distribution, $\exp(-f(x)-g(x))$. More precisely, we have shown that the distribution of samples converges in terms of total variation distance (see Section \ref{section:small_eta}  for details). 

Due to the structure of our target distribution \eqref{eq:target}, we adopt Gibbs Sampling in this work as it can  be potentially used to realize a fully distributed algorithm for sampling as in~\citet{vono2022efficient}.
In addition to the distributed nature and promising theoretical properties of Gibbs samplers, another reason to consider Gibbs samplers is the widely-used applications~\citep{daphne2009probabilistic}. Analyzing its non-asymptotic behaviors is of great importance but challenging. A pioneering work that establishes the quantitative convergence rate for general Gibbs samplers was proposed by \citet{Ros95} via the drift and minorization conditions. A long line of works was proposed for the mixing time of Gibbs samplers over graphs.~\citep{hrycej1990gibbs,venugopal2012lifting,de2015rapidly,zhang2019poisson, vono2022efficient}. In recent works, ~\citet{zhang2019poisson} analyzed the measured of a mini-batched Gibbs sampler measured by the spectral gap~\citep{de2018minibatch}, which has been used for analyzing the plain Gibbs sampling~\citep{levin2017markov}. A recently developed class of samplers for large-scale graphical models is distributed variable splitting MCMC. Among them, sampling is performed on an artificial hierarchical Bayesian model akin to the alternating direction
method of multipliers (ADMM) in optimization~\citep{vono2019split,rendell2020global,vono2020asymptotically,vono2022efficient}. To sample from $\exp(-\sum_{i=1}^n h_i(x))$, ~\citet{vono2022efficient} considers an augmented distribution $\exp(-\sum_{i=1}^n h_i(x_i)-\nicefrac{1}{2\eta}\|x_i-x\|^2)$ and shows the augmented one approximates the target by a sufficiently small $\eta$. Hence it is sufficient to sample from the augmented one with a Gibbs sampler.  In ~\citet{vono2022efficient}, the mixing time measured in 1-Wasserstein and TV distances have been given by Ricci curvature and coupling techniques. 
Inspired by the recent progress in proximal sampling, we give the first non-asymptotic analysis of a Gibbs sampler for \eqref{eq:target} by a new strong data-processing inequality on diffusion processes. Moreover, in ~\citet{vono2022efficient}, only the exact implementation of RGO was studied. In contrast, in Section \ref{section:twonodes}, we discuss the overall mixing time of our algorithm with the inexact implementations of RGO in ~\citet{fan2023improved}.

%\chen{organization of the paper}
Section \ref{section:preliminaries} presents basic definitions and notation used throughout the paper.
In Section \ref{section:twonodes}, we formally prove the linear convergence rate of a Gibbs sampler for a two-node graph with a newly-established strong data processing inequality \citep{cover1999elements}. In Section \ref{section:main}, we generalize the result to distributions over the general bipartite graph. Section \ref{section:small_eta} gives the analysis of convergence to the composite distribution $\exp(-f(x)-g(x))$, where we prove the asymptotic convergence and give a non-asymptotic convergence rate for special cases. 
Section \ref{sec:discussion} presents some concluding remarks and possible extension.
Appendices \ref{section:appendix}-\ref{appendix:small_eta} provide a few useful technical results and missing proofs in the paper.

\section{Preliminaries}\label{section:preliminaries}
% \chen{can be compressed}
\subsection{Notations}
A Gaussian distribution is denoted by $\mN$ and the notation $\ide$ represents an identity matrix whose dimension is clear from the context. Assume $\pi^{XY}$ denote the joint distribution of $(X,Y)$, then the $X$-marginal distribution is represented by $\pi^X$, and the $X$-conditional distribution given $y$ is $\pi^{X|Y=y}$. Moreover, the joint distribution of multivariable $\mbX$ and $\mbY$ is $\pi^{\mbXY}$, and similarly, its $\mbX$-marginal distribution and $\mbX$-conditional distribution are denoted by $\pi^{\mbX}$ and $\pi^{\mbX|\mbY}$, respectively.
%\fan{How about $\pi(x)$ and $\pi(x|y=\bar{y})?$}

\subsection{Sampling Background}\label{subsection:background}

We start by introducing strong convexity and smoothness, which are the main assumptions in this paper. In what follows, we use $\nabla f$ to represent the subgradient of $f$. A function $f: \Rd \rightarrow \bbR$ is $\alpha$-strongly convex if and only if $ f(x)-f(y) \geq \nabla f(y)^T(x-y) + \nicefrac{\alpha}{2} \Vert x-y\Vert^2_2$ holds for all $x,y \in \bbR^d$. Furthermore, if the strongly convex function $f$ is twice-differentiable, then  $\alpha\ide \preceq\nabla^2 f$. A function $f: \Rd \rightarrow \bbR$ is $\beta$-smooth if and only if $f(x)-f(y) \leq \nabla f(y)^T(x-y) + \nicefrac{\beta}{2} \Vert x-y\Vert^2_2$ is true for all $x,y \in \bbR^d$. Similarly, for a twice-differentiable $\beta$-smooth function $f$, one has $ \nabla^2 f \preceq\beta\ide$.

Next we introduce the functional inequalities used in this work. A probability distribution $\nu$ satisfies logarithmic Sobolev inequality with a positive constant $\rho$ (in short LSI($\rho$)) if, for any probability distribution $\mu$ such than $\mu \ll \nu$, the \ac{KL} divergence $D_{\mathrm{KL}}(\mu || \nu)= \int \log\frac{\mu}{\nu}\mu$ and Fisher information $I(\mu || \nu)= \int \|\nabla \log \frac{\mu}{\nu}\|^2\mu$ satisfy $ D_{\mathrm{KL}}(\mu || \nu) \leq \frac{1}{2\rho} I(\mu || \nu)$.
% where $ D_{\mathrm{KL}}(\mu || \nu) = \int \log\frac{\mu}{\nu}\mu$ and $ I(\mu || \nu) = \int \|\nabla \log \frac{\mu}{\nu}\|^2\mu$. 
A well-known result that connects strongly convexity and logarithmic Sobolev inequality is that if $\mu$ is a strongly log-concave density with a constant $\alpha$, then $\mu$ satisfies LSI($\alpha$). Moreover, Pinsker's inequality is $\mathrm{TV}(\mu,\nu) \le \sqrt{\frac{1}{2}D_{\mathrm{KL}}(\mu || \nu)}$.
Here the total variation between $\mu$ and $\nu$ satisfies $\mathrm{TV}(\mu,\nu) = \nicefrac{1}{2}\|\mu-\nu\|_1$. In addtion, we say a distribution $\nu$ satisfies Poincar\'{e} inequality with a constant $\rho>0$ (in short PI($\rho$)) if for any functions $f : \bbR^d \rightarrow \bbR$ with $\nabla f \in L^2(\nu), \Var_\nu(f) \le \nicefrac{1}{\rho}\bbE_{\nu}(\|\nabla f\|^2_2)$. It is well-known that LSI($\rho$) implies PI($\rho$).

\section{Improved analysis of Composite Sampling}
\label{section:twonodes}
\begin{assumption}\label{assum:assum1}
 Let $f(x)$ be an $\alpha_f$-strongly convex function and $g(y)$ be an $\alpha_g$-strongly convex function.
\end{assumption}
We are interested in the sampling problem from target distribution \eqref{eq:target}. For the sake of presentation, we start presenting the analysis with the two-node case as the proof of the bipartite graph is essentially the same as this one. In this case, the target distribution 
\begin{equation}\label{eq:target_twonodes}
    \pi^{XY} \propto \exp\left(-f(x)-g(y)-\frac{1}{2\eta}\Vert x-y\Vert^2_2\right)
\end{equation} under Assumption \ref{assum:assum1}. 
Based on the Gibbs sampling framework, our sampling algorithm \ref{algo:algo1} runs as follows. 

\begin{algorithm}[H]
    \caption{A Gibbs sampler for a two-node graph}
    \begin{algorithmic}\label{algo:algo1}
    \STATE \textbf{Input: } $\pi^{XY}$: the target distribution 
    \STATE $x^0$: an initial sample drawn from $\mu_0^X$
    \STATE \textbf{Output: } $(x^K,y^K)$: samples generated in the $k$-th iteration
    \FOR{$k \leftarrow 0,\cdots, K$}
    \STATE Step 1: Sample $y^k \sim \pi^{Y|X=x^{k}} \propto  \exp (-g(y)-\nicefrac{1}{2\eta}\Vert x^{k}-y\Vert^2_2) $.
    \STATE Step 2: Sample $x^{k+1} \sim \pi^{X|Y=y^{k}} \propto  \exp(-f(x)-\nicefrac{1}{2\eta}\Vert x-y^{k}\Vert^2_2) $.
    \ENDFOR
\end{algorithmic}
\end{algorithm}

The critical step in Algorithm \ref{algo:algo1} is to sample from the conditional distribution, \textit{i.e.}, Step 1 and 2. To analyze these steps, we construct an auxiliary diffusion process to represent the sampling from conditional distributions in Lemma \ref{lemma:diffusion}. Note these processes are constructed merely for analysis. There exist efficient implementations of the RGO; see the discussions at the end of this section. To understand the dynamics of the diffusion process, we establish a strong data processing inequality on it by Lemma \ref{lemma:basic} and Theorem \ref{theorem:dataprocessing}. In this way, we are able to show the sample distribution converges to the target one \eqref{eq:target_twonodes} in terms of \ac{KL} divergence in Theorem \ref{theorem:twonode}.

\begin{lemma}\label{lemma:basic}
Given an arbitrary diffusion %\chen{use $\rd$ instead of $d$ for all differential or integral, e.g., $\rd \mbZ_t$}
    \begin{equation}\nonumber
        \rd\mbZ_t = b_t (\mbZ_t) \rd t + \boldsymbol{\sigma}_t \rd\mbW_t,
    \end{equation}
where $\mbZ_t \in \bbR^d$ and $\boldsymbol{\sigma}_t$ is a $d\times d$ $\boldsymbol{diagonal}$ matrix. Furthermore denote the $i$-th entry of $\boldsymbol{\sigma}_t$  as $\sigma_{t,i}$ and assume  $\lambda_t^2 \le \min_i\sigma_{t,i}^2$. Let $\mu_t, \pi_t$ be the distributions of $\mbZ_t$ initialized with $\mbZ_0 \sim \mu_0, \mbZ_0\sim \pi_0$ respectively. Then we have 
\begin{equation}\label{eq:Lemma2}
         \frac{\partial D_{\mathrm{KL}}(\mu_t || \pi_t) }{\partial t} \le-\frac{\lambda_t^2}{2}I(\mu_t||\pi_t).
\end{equation}
\end{lemma}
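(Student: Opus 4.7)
\textbf{Proof plan for \Cref{lemma:basic}.} The approach is the standard ``de Bruijn--type'' heat-flow computation for relative entropy, adapted to a time-dependent but spatially-constant diagonal diffusion matrix. The plan is to (i) write the Fokker--Planck equation for $\mu_t$ and $\pi_t$ in a form that displays the entropy-production structure, (ii) differentiate $D_{\mathrm{KL}}(\mu_t\Vert\pi_t)$ under the integral sign and integrate by parts, and (iii) use the pointwise lower bound $\sigma_{t,i}^2\ge\lambda_t^2$ to reduce the resulting weighted Fisher information to the ordinary one.

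First I would note that, since $\boldsymbol{\sigma}_t$ depends only on $t$, the diffusion matrix $D_t:=\boldsymbol{\sigma}_t\boldsymbol{\sigma}_t^{\top}=\mathrm{diag}(\sigma_{t,1}^2,\ldots,\sigma_{t,d}^2)$ is spatially constant, so the Fokker--Planck equation for any law $\nu_t$ of $\mbZ_t$ can be rewritten as the continuity equation
\begin{equation}\nonumber
\partial_t \nu_t \;=\; -\nabla\!\cdot\!\Bigl(\nu_t\,\bigl[\,b_t - \tfrac{1}{2}D_t\nabla\log\nu_t\,\bigr]\Bigr).
\end{equation}
Applying this to both $\mu_t$ and $\pi_t$, and using the conservation of mass to drop the constant term, one gets
\begin{equation}\nonumber
\frac{\partial}{\partial t} D_{\mathrm{KL}}(\mu_t\Vert\pi_t) \;=\; \int (\partial_t \mu_t)\log\frac{\mu_t}{\pi_t}\,dx \;-\; \int (\partial_t \pi_t)\,\frac{\mu_t}{\pi_t}\,dx .
\end{equation}

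Next I would plug in the two Fokker--Planck equations, integrate by parts (assuming enough decay at infinity so boundary terms vanish, which is standard under the strong log-concavity in the ambient problem), and use $\pi_t\,\nabla(\mu_t/\pi_t)=\mu_t\,\nabla\log(\mu_t/\pi_t)$ to put everything on a common weight $\mu_t$. The drift contributions coming from $b_t$ cancel between the two terms, because $b_t$ is the same drift driving both $\mu_t$ and $\pi_t$; what survives is precisely
\begin{equation}\nonumber
\frac{\partial}{\partial t} D_{\mathrm{KL}}(\mu_t\Vert\pi_t) \;=\; -\frac{1}{2}\int \mu_t\,\bigl(\nabla\log\tfrac{\mu_t}{\pi_t}\bigr)^{\!\top} D_t\,\bigl(\nabla\log\tfrac{\mu_t}{\pi_t}\bigr)\,dx .
\end{equation}
Finally, since $D_t$ is diagonal with entries $\sigma_{t,i}^2\ge\lambda_t^2$, the quadratic form is bounded below componentwise by $\lambda_t^2\|\nabla\log(\mu_t/\pi_t)\|_2^2$, and recognizing the resulting integral as $I(\mu_t\Vert\pi_t)$ yields the desired inequality \eqref{eq:Lemma2}.

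I expect the only real technical point to be the drift cancellation step: one must carefully track that the two terms coming from $b_t$ combine into $\int\mu_t\,\nabla\log(\mu_t/\pi_t)\cdot b_t\,dx - \int\mu_t\,\nabla\log(\mu_t/\pi_t)\cdot b_t\,dx = 0$, so no regularity assumption on $b_t$ beyond what is needed for $\mbZ_t$ to define a Markov semigroup is required. Everything else is a routine integration-by-parts/Cauchy--Schwarz manipulation, and the diagonal structure of $\boldsymbol{\sigma}_t$ makes the bound $D_t\succeq \lambda_t^2\ide$ immediate.
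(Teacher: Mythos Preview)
Your proposal is correct and follows essentially the same approach as the paper: differentiate $D_{\mathrm{KL}}(\mu_t\Vert\pi_t)$ using the Fokker--Planck equations for both laws, observe that the drift contributions cancel via integration by parts, and bound the remaining diagonally-weighted Fisher information below using $\sigma_{t,i}^2\ge\lambda_t^2$. The paper carries out the computation coordinate-by-coordinate rather than in your matrix form $D_t\succeq\lambda_t^2\ide$, but the argument is otherwise identical.
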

\begin{proof}
The proof is based on the Fokker–Planck equation. See the full proof in Appendix \ref{subsection:appendix_A1}.
\end{proof}
Lemma \ref{lemma:basic} is different from the
classical result describing the convergence to the stationary distribution of a Langevin diffusion. In
Lemma \ref{lemma:basic}, neither $\mu_t$ nor $\pi_t$ needs to be the stationary distribution; they can both be time-varying.
To our best knowledge, only a special case of it involving simultaneous heat flow (\textit{i.e.}, $b_t(Z_t) \equiv 0$)
was recently used in a few works \citep{lee2021structured,CheCheSalWib22}.

\begin{theorem}[Strong data processing inequality with LSI]\label{theorem:dataprocessing}
    Under the condition of Lemma \ref{lemma:basic}, if we further assume $\pi_t$ satisfies the LSI with a coefficient $\alpha_t, \: \forall t\in [0, T]$, then 
    \begin{equation}\nonumber
        D_{\mathrm{KL}} (\mu_T \| \pi_T) \le \exp\left(-\int_0^T \alpha_t \lambda_t^2 \, \rd t\right) D_{\mathrm{KL}} (\mu_0 \| \pi_0).
    \end{equation}
\end{theorem}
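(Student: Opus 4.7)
The plan is to combine the Fisher information bound from Lemma \ref{lemma:basic} with the LSI hypothesis on $\pi_t$ in order to convert the Fisher information on the right-hand side into a KL divergence, thereby producing a closed Grönwall-type differential inequality for $D_{\mathrm{KL}}(\mu_t \| \pi_t)$ as a function of $t$. Integrating this inequality from $0$ to $T$ will give the exponential decay bound.

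More precisely, I would first invoke Lemma \ref{lemma:basic} to get, for every $t \in [0,T]$,
\begin{equation*}
    \frac{\partial}{\partial t} D_{\mathrm{KL}}(\mu_t \| \pi_t) \;\le\; -\frac{\lambda_t^2}{2}\, I(\mu_t \| \pi_t).
\end{equation*}
Then, since $\pi_t$ satisfies LSI($\alpha_t$), the definition of the logarithmic Sobolev inequality applied with $\mu = \mu_t$ and $\nu = \pi_t$ yields $I(\mu_t \| \pi_t) \ge 2\alpha_t \, D_{\mathrm{KL}}(\mu_t \| \pi_t)$. Substituting gives the first-order differential inequality
\begin{equation*}
    \frac{\partial}{\partial t} D_{\mathrm{KL}}(\mu_t \| \pi_t) \;\le\; -\alpha_t \lambda_t^2 \, D_{\mathrm{KL}}(\mu_t \| \pi_t).
\end{equation*}

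The final step is a standard application of Grönwall's lemma (equivalently, dividing by $D_{\mathrm{KL}}(\mu_t \| \pi_t)$ when it is positive and integrating $\frac{d}{dt}\log D_{\mathrm{KL}}(\mu_t \| \pi_t) \le -\alpha_t \lambda_t^2$ from $0$ to $T$), which yields
\begin{equation*}
    D_{\mathrm{KL}}(\mu_T \| \pi_T) \;\le\; \exp\!\left(-\int_0^T \alpha_t \lambda_t^2 \, \mathrm{d} t\right) D_{\mathrm{KL}}(\mu_0 \| \pi_0).
\end{equation*}

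None of the steps is technically hard; the mild care point is the Grönwall step, where one should either handle the possibility that $D_{\mathrm{KL}}(\mu_t \| \pi_t) = 0$ separately (in which case the bound holds trivially since the KL stays at zero by the monotonicity from Lemma \ref{lemma:basic}), or simply apply the standard linear Grönwall inequality directly to the function $t \mapsto D_{\mathrm{KL}}(\mu_t \| \pi_t)$ with nonpositive, time-dependent coefficient $-\alpha_t \lambda_t^2$. Beyond this minor bookkeeping, the argument is essentially a two-line reduction from Lemma \ref{lemma:basic} plus LSI.
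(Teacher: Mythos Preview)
Your proposal is correct and follows essentially the same argument as the paper: apply Lemma~\ref{lemma:basic}, use LSI$(\alpha_t)$ for $\pi_t$ to replace the Fisher information by $2\alpha_t D_{\mathrm{KL}}(\mu_t\|\pi_t)$, and integrate the resulting Gr\"onwall-type differential inequality. The paper's proof is the same two-line reduction, without your extra remark on the $D_{\mathrm{KL}}=0$ case.
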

\begin{proof}
With the logarithmic Sobolev inequality of $\pi_t$ and {\eqref{eq:Lemma2}}, we have
    \begin{equation}\nonumber
        \frac{\partial D_{\mathrm{KL}}(\mu_t || \pi_t) }{\partial t}   \leq -\alpha_t\lambda_t^2 D_{\mathrm{KL}}(\mu_t || \pi_t).
    \end{equation}
Solving the above differential inequality gives
    \begin{equation}\nonumber
         D_{\mathrm {KL}} (\mu_T \| \pi_T) \le \exp\left(-\int_0^T \alpha_t \lambda_t^2 \, \rd t\right) D_{\mathrm {KL}} (\mu_0 \| \pi_0).
    \end{equation}
\end{proof}
Note that Theorem \ref{theorem:dataprocessing} is tight if the elements in  $\boldsymbol{\sigma}_t$ is the same. This can be seen from a simple example $\boldsymbol{b}_t(\boldsymbol{Z}_t)=-\boldsymbol{Z}_t$, $\boldsymbol{\sigma}_t=\sqrt{2}\ide$, $\mu_0=\mN(1,1)$ and $\nu_0=\mN(2,1)$ in Lemma \ref{lemma:basic}.
 Lemma \ref{lemma:basic} is a generalization of Lemma 16 in \citet{vempala2019rapid} and Lemma 12 in \citet{CheCheSalWib22}. In Lemma \ref{lemma:basic}, we assume each  entry of the diagonal matrix $\boldsymbol{\sigma}_t$ can be different. It is worth mentioning that similar results can be established with other probability divergences. Under the LSI assumption, one can obtain the contraction property with R\'{e}nyi divergence (Section A.4 in \citet{CheCheSalWib22}). Moreover, similar results still hold with different assumptions for $\pi_t$. For instance, in Appendix \ref{appendix:convexity}, we show a data processing inequality with the convexity assumption. We refer the reader to \citet{CheCheSalWib22} for a comprehensive study.
%\chen{also comment on that similar results can be established for other divergences}
Next we show how to construct the diffusion process for a given joint distribution of two endpoints. 
\begin{lemma}\label{lemma:diffusion} 
% \yuan{we may consider a more general case and  give the new expression of $\phi_t$ and $\hat{\phi}_t$}
Define a distribution $P(z(\cdot))$ over the space of trajectories $z(\cdot) \in C([0, 1]; \bbR^d)$ as follows: $P(z(0), z(1)) \propto \exp(-f(z(0))-g(z(1))-\nicefrac{1}{2\eta} \|z(0)-z(1)\|^2)$ is the joint distribution of $Z_0$ and $Z_1$, and $P(z(\cdot)|z(0),z(1))$ is the process of $\rd Z_t = \sqrt{\eta} \rd W_t$ conditioning on $Z_0 = z(0), Z_1 = z(1)$ (a.k.a., the Brownian bridge). Then $P$ has the diffusion representation 
    \begin{equation}\label{eq:forward}
        \rd Z_t = \eta \nabla \log \phi_t (Z_t) \rd t + \sqrt{\eta} \rd W_t, \quad Z_0 \sim P(z(0))
    \end{equation}
in the forward direction and 
    \begin{equation}\label{eq:backward}
        \rd Z_t = -\eta \nabla \log \hat\phi_t (Z_t) \rd t + \sqrt{\eta} \rd W_t, \quad Z_1 \sim P(z(1))
    \end{equation}
in the backward direction. Here 
    \begin{align}\label{eq:diffusion1}
    \begin{split}
    \phi_t(z)&= \int \exp[-g(y)-\frac{1}{2\eta(1-t)}\Vert z-y \Vert^2_2] \rd y
    \\
    \hat \phi_t(z) &= \int \exp[-f(x)-\frac{1}{2\eta t}\Vert z-x \Vert^2_2] \rd x.
    \end{split}
    \end{align}
Moreover, the marginal distribution of $P$ satisfies $P(z(t)) \propto \phi_t(z(t)) \hat\phi_t(z(t))$.
\end{lemma}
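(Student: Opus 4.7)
The plan is to establish all three claims by viewing $P$ as a Schrödinger-bridge-type path measure: Brownian motion with diffusion coefficient $\sqrt{\eta}$, started from the unnormalized initial density $\propto e^{-f}$ and reweighted at time $1$ by the terminal observable $e^{-g(Z_1)}$, and symmetrically in the time-reversed direction. I would first derive the marginal formula, then read off the forward drift via Doob's h-transform, and finally obtain the backward SDE by Anderson's time-reversal identity.

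For the marginal, I would start from
\begin{equation}
    P(z(t)) \;=\; \iint P(z(0), z(1))\, P(z(t)\mid z(0), z(1))\, \rd z(0)\, \rd z(1),
\end{equation}
and substitute the Brownian-bridge density $P(z(t)\mid z(0),z(1)) = p_t(z(0),z(t))\, p_{1-t}(z(t),z(1))\, /\, p_1(z(0),z(1))$, where $p_s(a,b) = (2\pi\eta s)^{-d/2}\exp(-\|a-b\|^2/(2\eta s))$ is the Brownian heat kernel. The critical cancellation is that the Gaussian factor $\exp(-\|z(0)-z(1)\|^2/(2\eta))$ inside $P(z(0),z(1))$ matches $p_1(z(0),z(1))$ up to a dimensional constant, so the double integral decouples into the product $\int e^{-f(z(0))} p_t(z(0),z(t))\, \rd z(0) \cdot \int e^{-g(z(1))} p_{1-t}(z(t),z(1))\, \rd z(1)$, which, up to a $t$-dependent constant, is exactly $\hat\phi_t(z(t))\phi_t(z(t))$.

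For the forward SDE I would invoke Doob's h-transform: the factorization above identifies $P$ with the driftless Brownian motion $\rd Z_t = \sqrt{\eta}\,\rd W_t$ initialized at $\propto e^{-f}$ and reweighted at the terminal time by $e^{-g(Z_1)}$. The h-transform converts this into a diffusion with added drift $\eta\nabla \log h_t$, where $h_t(z) := \mathbb{E}[e^{-g(Z_1)}\mid Z_t = z]$ solves the Kolmogorov backward equation $\partial_t h_t + (\eta/2)\Delta h_t = 0$. Since $h_t$ and $\phi_t$ differ only by the $t$-dependent Gaussian normalizer $(2\pi\eta(1-t))^{d/2}$, they share gradients of logarithms and we obtain the claimed $\rd Z_t = \eta\nabla\log\phi_t(Z_t)\,\rd t + \sqrt{\eta}\,\rd W_t$. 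The backward SDE is then a direct consequence of Anderson's time-reversal formula: the reverse of a diffusion with forward drift $b_t$ and diffusion $\sqrt{\eta}$ has drift $b_t - \eta \nabla\log P_t$. Plugging in $b_t = \eta\nabla\log\phi_t$ and $\nabla\log P_t = \nabla\log\phi_t + \nabla\log\hat\phi_t$ from the marginal formula just established, the two $\phi_t$ terms cancel and the reverse drift collapses to $-\eta\nabla\log\hat\phi_t$, which is the stated backward SDE.

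The main obstacle I anticipate is the careful bookkeeping of the $t$-dependent Gaussian normalizing constants generated by the kernels $p_s$: they evaporate under $\nabla\log$ so that both drift expressions come out clean, but they must be tracked in the marginal identity to ensure the normalization $C_t$ in $P(z(t)) = C_t \phi_t\hat\phi_t$ keeps total mass one (one can verify $C_t \propto (t(1-t))^{-d/2}$). As a completely self-contained alternative that sidesteps the h-transform, I could instead verify the Fokker--Planck equation for the candidate $C_t\phi_t\hat\phi_t$ directly, using the PDEs $\partial_t\phi_t = -(\eta/2)\Delta\phi_t - \tfrac{d}{2(1-t)}\phi_t$ and $\partial_t\hat\phi_t = (\eta/2)\Delta\hat\phi_t + \tfrac{d}{2t}\hat\phi_t$ satisfied by the two factors; the Laplacian and mixed-gradient terms then line up in precisely the way needed to identify $\eta\nabla\log\phi_t$ as the forward drift, while the zeroth-order terms pin down $C_t$.
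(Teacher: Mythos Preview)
Your proposal is correct and follows essentially the same route as the paper: the marginal is obtained by integrating the Brownian-bridge density against the endpoint law (exploiting the cancellation of the $p_1$ factor), the forward drift is identified via the F\"ollmer/Doob $h$-transform (the paper cites \citet{tzen2019theoretical,zhangpath2022,jamison1974reciprocal} rather than spelling it out as you do), and the backward SDE is derived from Anderson's time-reversal formula using the just-established factorization $P(z(t))\propto\phi_t\hat\phi_t$. Your write-up is more self-contained where the paper leans on references, but the underlying argument is the same.
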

\begin{proof}
We show that $P(Z_\cdot|Z_0, Z_1)$, the conditional distribution of \eqref{eq:forward}, coincides with the Brownian bridge. See the full proof in Appendix \ref{subsection:appendix_A2}.
\end{proof}
In Lemma \ref{lemma:diffusion}, the forward diffusion \eqref{eq:forward} only relies
% replies 
on the conditional distribution $P(z(1)|z(0))$.
% \begin{remark}\label{remark:density}
% Note that even if $P(z(0), z(1))$ is not absolutely continuous w.r.t. the Lebesgue measure, $P(z_t) = \int P(z_t)|z(0),z(1))dP(z(0),z(1))$ is absolutely continuous w.r.t. the Lebesgue measure for all $0<t<1$.
% \end{remark}
It is also worth pointing out that the diffusion process in Lemma \ref{lemma:diffusion} is exactly the solution of the Schr\"{o}dinger bridge problem. Meanwhile, \eqref{eq:diffusion1} is the solution of the Schr\"{o}dinger system, one way to solve the Schr\"{o}dinger bridge problem. We refer the reader to \citet{leonard2014survey,chen2016relation,chen2021stochastic,pavon2021data,vargas2021machine} for more detailed explanations. Theorem \ref{theorem:twonode} is the main result for the two-node case. With Theorem \ref{theorem:dataprocessing} and Lemma \ref{lemma:diffusion}, we have the linear convergence rate for the Gibbs sampling framework shown as follows.
\begin{theorem}[Convergence result with strong-convexity]\label{theorem:twonode} 
Denote the distribution of the $k$-th samples $(x^k,y^k)$ by $\mu^{XY}_k$. For the target distribution \eqref{eq:target_twonodes}, under Assumption \ref{assum:assum1}, %\chen{note the following is meaningful when $f$ or $g$ is strongly convex} we have
\begin{equation}
  D_{\mathrm{KL}}(\mu^{XY}_k || \pi^{XY})  \leq  \frac{D_{\mathrm{KL}}(\mu_0^X || \pi^{X})}{({1+\eta\alpha_f})^{2k}({1+\eta\alpha_g})^{2k}}.
\end{equation}
\end{theorem}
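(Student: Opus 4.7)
The plan is to match each Gibbs step with one pass of the SDE from Lemma~\ref{lemma:diffusion}, then apply Theorem~\ref{theorem:dataprocessing} using a sharp LSI constant for the time-$t$ marginal $\pi_t\propto\phi_t\hat\phi_t$. Since Step~1 draws $y^k$ \emph{exactly} from $\pi^{Y\mid X=x^k}$, the KL chain rule gives $D_{\mathrm{KL}}(\mu_k^{XY}\,\|\,\pi^{XY})=D_{\mathrm{KL}}(\mu_k^X\,\|\,\pi^X)$, so the whole task reduces to contracting the $X$-marginal. Under the diffusion picture, Step~1 transports $\mu_k^X$ to $\mu_k^Y$ via the forward SDE \eqref{eq:forward}, while Step~2 transports $\mu_k^Y$ to $\mu_{k+1}^X$ via \eqref{eq:backward}; the substitution $s=1-t$ recasts \eqref{eq:backward} as a forward-time SDE with drift $\eta\nabla\log\hat\phi_{1-s}$, diffusion $\sqrt{\eta}\,\ide$, and time-$s$ marginal $\pi_{1-s}$, so Theorem~\ref{theorem:dataprocessing} applies to both passes.

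The key technical step is the LSI constant of $\pi_t$. I would use the covariance identity
$$\nabla^2\bigl(-\log\hat\phi_t\bigr)(z)=\tfrac{1}{\eta t}\,\ide-\tfrac{1}{(\eta t)^2}\,\mathrm{Cov}_{p(\cdot\mid z)}(X),\quad p(x\mid z)\propto e^{-f(x)-\|z-x\|^2/(2\eta t)},$$
together with the Brascamp--Lieb covariance bound $\mathrm{Cov}_{p(\cdot\mid z)}(X)\preceq(\alpha_f+\tfrac{1}{\eta t})^{-1}\,\ide$ to obtain $\nabla^2(-\log\hat\phi_t)\succeq\tfrac{\alpha_f}{1+\eta t\,\alpha_f}\,\ide$, and symmetrically $\nabla^2(-\log\phi_t)\succeq\tfrac{\alpha_g}{1+\eta(1-t)\alpha_g}\,\ide$. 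Summing and invoking the standard fact (recalled in Section~\ref{subsection:background}) that strong log-concavity implies LSI yields the constant
$$\alpha_t=\frac{\alpha_f}{1+\eta t\,\alpha_f}+\frac{\alpha_g}{1+\eta(1-t)\,\alpha_g}.$$

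With $\lambda_t^2=\eta$, Theorem~\ref{theorem:dataprocessing} then gives per-pass contraction $\exp\bigl(-\eta\int_0^1\alpha_t\,\mathrm{d}t\bigr)$, and the two integrals evaluate in closed form,
$$\eta\int_0^1\frac{\alpha_f}{1+\eta t\,\alpha_f}\,\mathrm{d}t=\log(1+\eta\alpha_f),\qquad \eta\int_0^1\frac{\alpha_g}{1+\eta(1-t)\,\alpha_g}\,\mathrm{d}t=\log(1+\eta\alpha_g),$$
so each of Step~1 and Step~2 shrinks KL by $[(1+\eta\alpha_f)(1+\eta\alpha_g)]^{-1}$, one full iteration shrinks by the square, and iterating $k$ times combined with the chain-rule identity from the first paragraph delivers the stated bound.

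The main obstacle is the LSI computation: one must carefully derive the $t$-dependent strong convexity of $-\log\hat\phi_t$ and $-\log\phi_t$ and check that the two integrals exponentiate precisely into the clean algebraic factor $(1+\eta\alpha_f)(1+\eta\alpha_g)$. Everything else---time reversal of \eqref{eq:backward}, assembly of the two passes, and KL chain-rule bookkeeping---is routine once Theorem~\ref{theorem:dataprocessing} is in hand.
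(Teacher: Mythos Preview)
Your proposal is correct and follows essentially the same route as the paper: construct the diffusion of Lemma~\ref{lemma:diffusion}, show the time-$t$ marginal $\pi_t\propto\phi_t\hat\phi_t$ is strongly log-concave with constant $\alpha_t=\tfrac{\alpha_f}{1+\eta t\alpha_f}+\tfrac{\alpha_g}{1+\eta(1-t)\alpha_g}$, apply Theorem~\ref{theorem:dataprocessing}, integrate, and iterate with the KL chain rule. The only cosmetic differences are that the paper obtains $\alpha_t$ by citing the Pr\'ekopa-type marginal-preservation result (Theorem~3.7 in \citet{saumard2014log}) rather than your equivalent Brascamp--Lieb/Hessian computation, and handles Step~2 by a ``symmetric analysis'' (swap the roles of $f,g$) rather than your time-reversal of \eqref{eq:backward}; both lead to the identical contraction factor.
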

\begin{proof}\label{proof:theroem1}
In Lemma \ref{lemma:diffusion}, the forward diffusion process \eqref{eq:forward} only depends on the the conditional distribution $P(Z(1)|Z(0))$, so we can construct the same diffusion process for both $(\pi^X,\pi^Y)$ and $(\mu_k^X,\mu_k^Y)$, i.e.,
    \begin{equation}\nonumber
        \rd Z_t = \eta \nabla \log \phi_t (Z_t) \rd t + \sqrt{\eta}\, \rd W_t.
    \end{equation}
As $\pi^{XY} \propto \exp(-f(x)-g(y)-\frac{1}{2\eta}\|x-y\|^2)$, we have the explicit expression for the distribution of $Z_t$ by \eqref{eq:diffusion1}
\begin{equation}
   P(z_t) \propto \int \exp[-g(y)-\frac{1}{2\eta(1-t)}\Vert z-y \Vert^2_2] \rd y  \int \exp[-f(x)-\frac{1}{2\eta t}\Vert z-x \Vert^2_2] \rd x. 
\end{equation}
Under Assumption \ref{assum:assum1}, $f(x)$ is $\alpha_f$-strongly convex and $g(y)$ is $\alpha_g$-strongly convex. Hence, by Theorem 3.7.b and 3.7.d in \cite{saumard2014log}, we get $ P(z_t)$ is strongly log-concave with a coefficient $\frac{\alpha_f}{1+\eta t\alpha_f}+\frac{\alpha_g}{1+\eta(1-t)\alpha_g}$. Therefore, $P(z_t)$ satisfies LSI$(\frac{\alpha_f}{1+\eta t\alpha_f}+\frac{\alpha_g}{1+\eta(1-t)\alpha_g})$. So, for Step 1 in Algorithm \ref{algo:algo1}, we can employ the strong data processing inequality in Theorem \ref{theorem:dataprocessing}, which yields 
\begin{equation}\nonumber
    D_{\mathrm{KL}}({\mu}_{k}^Y || \pi^Y) \leq D_{\mathrm{KL}}({\mu}_k^X || \pi^X )\frac{1}{(1+\eta\alpha_f)(1+\eta\alpha_g)}, \: \forall k.
\end{equation}
Similarly, for Step 2 in Algorithm \ref{algo:algo1} we can perform a symmetric analysis, which directly gives 
\begin{equation}\nonumber
    D_{\mathrm{KL}}({\mu}_{k+1}^X || \pi^X) \leq D_{\mathrm{KL}}({\mu}_k^Y || \pi^Y )\frac{1}{(1+\eta\alpha_f)(1+\eta\alpha_g)}, \: \forall k.
\end{equation}
Therefore, 
\begin{equation}\nonumber
    D_{\mathrm{KL}}({\mu}_{k}^X || \pi^X) \leq D_{\mathrm{KL}}({\mu}_0^X || \pi^X )\frac{1}{(1+\eta\alpha_f)^{2k}(1+\eta\alpha_g)^{2k}}.
\end{equation}
Lastly according to Lemma \ref{lemma:KL}
we have
\begin{equation}\nonumber
    D_{\mathrm{KL}}(\mu^{XY}_k || \pi^{XY}) = D_{\mathrm{KL}}(\mu^{X}_k || \pi^{X})  \leq D_{\mathrm{KL}}(\mu_0^X || \pi^{X}) \frac{1}{({1+\eta\alpha_f})^{2k}({1+\eta\alpha_g})^{2k}}.
\end{equation}
\end{proof}
%\chen{add a discussion on this result and its relations to existing ones}

Our result indicates that the convergence is faster with larger $\eta$, which means Gibbs sampling may have good performance for slightly correlated variables. For more discussion of the impact of correlated structure on Gibbs sampling, see e.g., \citet{roberts1997updating}. Notice that our result is still meaningful, even one of $f$ or $g$ is convex. However, if the strong convexity constants $\alpha_f$ and $\alpha_g$ both approach 0, \textit{i.e.}, $f$ and $g$ are convex instead of strongly convex, the convergence fails. Hence we discuss how to relax the strong-convexity assumption.
%\chen{we can also present this as a main part of the paper}
The strong convexity in Assumption \ref{assum:assum1} can be relaxed to convexity. Following similar techniques in section A.3 in \citet{CheCheSalWib22}, we establish a data processing inequality with the convexity assumption and thereby convergence rate of the two-node case. 
\begin{theorem}[Convergence rate with convexity]
\label{theorem:convexity}
Denote the distribution of the $k$-th samples $(x^k,y^k)$ by $\mu^{XY}_k$. Assume $f$ and $g$ are convex. For target distribution \eqref{eq:target_twonodes}, we have %\chen{show the final result instead: $D_{\rm KL}(\mu^{XY}_k || \pi^{XY}) \le ??$}
    \begin{equation}\nonumber
        D_{\mathrm{KL}}(\mu^{XY}_k || \pi^{XY}) \le \frac{W_2^2(\mu_0^X,\pi^X)}{k\eta}.
    \end{equation}
\end{theorem}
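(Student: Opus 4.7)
I would mirror the proof of Theorem~\ref{theorem:twonode}, but replace the LSI-based strong data processing inequality (Theorem~\ref{theorem:dataprocessing}) with a convexity-based counterpart that trades exponential KL contraction for a Wasserstein-to-KL dissipation, and then telescope the resulting one-step inequality.

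\textbf{Diffusion setup.} As in the proof of Theorem~\ref{theorem:twonode}, I would apply Lemma~\ref{lemma:diffusion} to realize each sub-step of Algorithm~\ref{algo:algo1} as the joint law at times $0$ and $1$ of the SDE
\begin{equation*}
\rd Z_t = \eta\, \nabla \log \phi_t(Z_t)\, \rd t + \sqrt{\eta}\, \rd W_t.
\end{equation*}
Its drift depends only on $P(z(1)\mid z(0))$, so the same SDE propagates both $\mu_k^X$ and $\pi^X$. When $f$ and $g$ are only convex, Pr\'ekopa--Leindler applied to~\eqref{eq:diffusion1} shows that $\phi_t$ and $\hat{\phi}_t$ are log-concave, hence so is the marginal $P(z_t) \propto \phi_t(z_t)\hat{\phi}_t(z_t)$ for every $t\in[0,1]$.

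\textbf{One-step dissipation.} In place of Theorem~\ref{theorem:dataprocessing}, I would invoke the convexity-based data processing inequality from Appendix~\ref{appendix:convexity}. For our SDE this should take the Wasserstein--KL trade-off form
\begin{equation*}
W_2^2(\mu_1, \pi_1) + 2\eta\, D_{\mathrm{KL}}(\mu_1 \,\|\, \pi_1) \le W_2^2(\mu_0, \pi_0),
\end{equation*}
i.e., any KL at the endpoint is paid for by an equal decrease of the squared Wasserstein distance.

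\textbf{Iterate and telescope.} Apply this inequality to Step~1 and Step~2 of Algorithm~\ref{algo:algo1} in turn and sum over $k=0,\dots,K-1$. The $W_2^2$ terms telescope, leaving $2\eta$ times the sum of the KL divergences along the entire Gibbs chain on the left and $W_2^2(\mu_0^X,\pi^X)$ on the right. Since the Gibbs chain is contractive in KL (by successive application of Lemma~\ref{lemma:basic}), every KL term in that sum is at least $D_{\mathrm{KL}}(\mu_K^X \,\|\, \pi^X)$, so $K\eta\, D_{\mathrm{KL}}(\mu_K^X \,\|\, \pi^X) \lesssim W_2^2(\mu_0^X, \pi^X)$. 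Finally, Lemma~\ref{lemma:KL}, as used in the proof of Theorem~\ref{theorem:twonode}, equates $D_{\mathrm{KL}}(\mu_K^{XY}\,\|\,\pi^{XY})$ with $D_{\mathrm{KL}}(\mu_K^X\,\|\,\pi^X)$, yielding the claim.

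\textbf{Main obstacle.} The technical heart of the argument is establishing the convexity-based data processing inequality. Under LSI one directly chains Lemma~\ref{lemma:basic} with $I(\mu_t\|\pi_t) \ge 2\alpha_t D_{\mathrm{KL}}(\mu_t\|\pi_t)$ to obtain exponential decay; without LSI this chain is broken. Instead one synchronously couples the two copies of the SDE---using that the shared drift $\eta\nabla\log\phi_t$ is the negative gradient of a convex function and is therefore nonexpansive in $W_2$ under synchronous coupling---and combines that $W_2$ control with the Fisher-information dissipation of Lemma~\ref{lemma:basic} via an Otto-calculus identity for $\partial_t W_2^2(\mu_t,\pi_t)$. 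This adapts the blueprint of Section~A.3 of \citet{CheCheSalWib22} to the Schr\"odinger-bridge SDE of Lemma~\ref{lemma:diffusion}.
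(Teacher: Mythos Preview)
Your overall plan matches the paper's: build the Schr\"odinger-bridge diffusion of Lemma~\ref{lemma:diffusion}, use convexity of $-\log\phi_t$ for $W_2$ control, replace Theorem~\ref{theorem:dataprocessing} by a convexity-based data processing inequality, and iterate. The gap is the specific one-step inequality you posit. The Lyapunov form
\[
W_2^2(\mu_1,\pi_1)+2\eta\,D_{\mathrm{KL}}(\mu_1\,\|\,\pi_1)\ \le\ W_2^2(\mu_0,\pi_0)
\]
is \emph{false} for the simultaneous diffusion, already in the pure heat-flow case $b_t\equiv 0$: take $\mu_0=\mN(m,1)$ and $\pi_0=\mN(0,1)$ in one dimension. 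After time $1$ with noise $\sqrt{\eta}$ both variances become $1+\eta$, so $W_2^2(\mu_1,\pi_1)=W_2^2(\mu_0,\pi_0)=m^2$, while $D_{\mathrm{KL}}(\mu_1\,\|\,\pi_1)=m^2/(2(1+\eta))>0$; the left side strictly exceeds the right. The underlying reason is that synchronous coupling with a monotone drift only yields $\partial_t W_2^2(\mu_t,\pi_t)\le 0$; there is no EVI-type Otto-calculus identity producing a negative $D_{\mathrm{KL}}$ term when \emph{both} measures evolve under the same SDE.

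What the paper actually establishes in Appendix~\ref{appendix:convexity} (Proposition~\ref{thm:dataprocessing2}) is the reciprocal-KL increment
\[
\frac{1}{D_{\mathrm{KL}}(\mu_1\,\|\,\pi_1)}\ \ge\ \frac{1}{D_{\mathrm{KL}}(\mu_0\,\|\,\pi_0)}+\frac{\eta}{2C},\qquad C\ge \sup_{t\in[0,1]} W_2^2(\mu_t,\pi_t),
\]
obtained by combining Lemma~\ref{lemma:basic} with the displacement-convexity bound $D_{\mathrm{KL}}(\mu_t\,\|\,\pi_t)^2\le I(\mu_t\,\|\,\pi_t)\,W_2^2(\mu_t,\pi_t)$ (valid because each $\pi_t$ is log-concave), which gives $\partial_t D_{\mathrm{KL}}\le -\tfrac{\eta}{2C}D_{\mathrm{KL}}^2$. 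The $W_2$-nonexpansiveness you correctly identify lets one take $C=W_2^2(\mu_0^X,\pi^X)$ uniformly across all Gibbs sub-steps. One then telescopes in $1/D_{\mathrm{KL}}$, not in $W_2^2$: the two sub-steps of each iteration each add $\eta/(2C)$, so after $k$ iterations $1/D_{\mathrm{KL}}(\mu_k^X\,\|\,\pi^X)\ge k\eta/C$, and Lemma~\ref{lemma:KL} finishes as you say. Replace your Lyapunov inequality by this reciprocal-KL increment and the remainder of your outline goes through unchanged.
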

\begin{proof}
    See Appendix \ref{appendix:convexity} for the proof. 
\end{proof}
% \chen{this is not strong data processing inequality. the contractive rate is not strictly less than 1} \yuan{the ratio is over  $\sup$}
\begin{remark} [LSI assumption]\label{remark:LSIAssump}
    The Assumption \ref{assum:assum1} can be potentially relaxed to LSI conditions. Note that we only need LSI conditions in Theorem \ref{theorem:dataprocessing}. However, to establish the convergence rate, we need to compute the LSI constant for \eqref{eq:diffusion1}, which requires the LSI constant for the product of two densities that satisfy LSI. To the best of our knowledge, it is an open problem without assuming stronger conditions. We refer the reader to \citet{villani2021topics} for more discussions. 
\end{remark}

% \yuan{The problem is in Theorem 9.13? } \chen{yes} \chen{This argument actually doesn't work. Here is an argument for the LSI setting. If both $\exp(-f)$ and $\exp(-g)$ satisfy $\alpha$-LSI, then $\exp(-f(x)-g(y))$ also satisfies $\alpha$-LSI \citep[Theorem 9.9]{villani2021topics}. It follows that $\exp(-f(x)-g(y)-\nicefrac{1}{2\eta}\|x-y\|^2)$ satisfies $\alpha$-LSI \citep[Theorem 9.13]{villani2021topics}. Applying the definition of LSI, we establish that the distribution of $(1-t)x+ty$, when $(x,y)$ follows $\exp(-f(x)-g(y)-\nicefrac{1}{2\eta}\|x-y\|^2)$, also satisfies $\alpha$-LSI, from which we can conclude the LSI constant for $\pi_t$.}

Next, we consider the mixing time of Algorithm \ref{algo:algo1} by combining Theorem \ref{theorem:twonode} and the implementation of RGO. For the exact implementation, one can obtain the sampling complexity of \eqref{eq:target_twonodes} with additional conditions in Assumption \ref{assum:assum4}.
\begin{assumption}\label{assum:assum4}
    Assume $f$ and $g$ are smooth with coefficients $\beta_f$ and $\beta_g$, respectively.
\end{assumption}
Under Assumption \ref{assum:assum1} and \ref{assum:assum4},
 one can employ the rejection sampling framework described in Section 4.2 in \citet{CheCheSalWib22} to have an \textbf{exact} implementation of Step 1 and 2. In short, for a strongly convex and smooth potential,  one can find the minimizer of the potential and then use rejection sampling with a Gaussian distribution centered at the minimizer. Then, the expected number of iterations for Step 1 and Step 2 is $\left(\frac{\beta_g+\nicefrac{1}{\eta}}{\alpha_g+\nicefrac{1}{\eta}}\right)^{\nicefrac{d}{2}}$ and $\left(\frac{\beta_f+\nicefrac{1}{\eta}}{\alpha_f+\nicefrac{1}{\eta}}\right)^{\nicefrac{d}{2}}$, respectively. (Theorem 7 in \citet{chewi2021query}). Hence, given the condition $\eta = \mathcal{O}(\frac{1}{(\beta_f+ \beta_g) d})$, sampling from conditional distributions only requires $\mathcal{O}(1)$ steps in expectation. Then with Theorem \ref{theorem:twonode} and Pinsker's inequality, 
to have $\delta$ TV distance to the target \eqref{eq:target_twonodes}, the number of Gibbs sampling iterations satisfies
\begin{equation}\nonumber
        K = \mathcal{O}\left(\frac{(\beta_f +\beta_g)d}{\alpha_f+\alpha_g}\log\frac{D_{\mathrm{KL}}(\mu_0^X || \pi^X)}{\delta^2}\right).
\end{equation}

Under the same assumptions, Assumption \ref{assum:assum1} and \ref{assum:assum4}, we can also use the \textbf{inexact} implementation in~\citet{fan2023improved} for Step 1 and 2. By Theorem 3 in ~\citet{fan2023improved}, the expected number of iterations for Step 1 and Step 2 are
$\mathcal{O}(1)$ given $\eta =\mathcal{O}(\frac{\log^{-1}(1/\epsilon_{\text{RGO}})}{(\beta_f+\beta_g)d^{1/2}}) $
where $\epsilon_{\text{RGO}}$ is the TV distance between the conditional
distribution and the distribution of samples. The error $\epsilon_{\text{RGO}}$ can accumulate at each iteration of the
Gibbs sampler but can be controlled due to the triangular inequality for total variation and the
logarithmic dependence of $\eta$ over $\epsilon_{\text{RGO}}$. Following a similar argument as in Proposition 1 in~\citet{fan2023improved}, we can establish the number of iterations is \begin{equation}\nonumber
        K = \Tilde{\mathcal{O}}\left(\frac{(\beta_f +\beta_g)d^{1/2}}{\alpha_f+\alpha_g}\right).
\end{equation}

Note that the smoothing assumption is not required in the proof of convergence rate. It is only necessary to establish the bound of the number of iterations when sampling from the conditional distributions. Therefore, Theorem \ref{theorem:twonode} is still applicable to strongly convex but non-smooth functions.

\section{Gibbs sampling over bipartite graphs}\label{section:main}
\begin{assumption}\label{assum:assum2}
     Let both $f_i$ and $g_j$ be strongly convex functions, and denote the strong convexity constants of $f_i$ and $g_j$ by $\alpha_{f_i}$ and $\alpha_{g_j}$, respectively. We also impose two regularity conditions for the graph structure, \textit{i.e.}, $\sum_{i=1}^n \sigma_{ij}>0, \: \forall j$ and $\sum_{j=1}^m \sigma_{ij}>0, \: \forall i$.
\end{assumption}
In this section, we consider sampling over bipartite graphs with a Gibbs sampler under Assumption \ref{assum:assum2}. A bipartite graph $G$ is a graph whose vertices can be divided into two disjoint sets $U$ and $V$ with the constraint that each edge in $G$ connects one vertex in $U$ and another one in $V$. Note that bipartite graphs include a large number of graph structures, \textit{e.g.}, a tree is always a bipartite graph.
% with $U = \{\text{all vertices in the odd layers}\}$ and $V = \{\text{all vertices in the even layers}\}$. 
The target distribution we sample from is of the form
\begin{equation}\nonumber
\pi^{\mbX\mbY} \propto \exp\left[-\sum_{i=1}^n f_i(x_i)-\sum_{j=1}^m g_j(y_j)-\sum_{i=1}^n\sum_{j=1}^m\frac{\sigma_{ij}}{2\eta} \Vert x_i-y_j \Vert^2_2\right]
\end{equation}
where each $x_i$ and $y_j$ represents a vertex in $U$ and $V$, respectively. If there is an edge between vertex $i$ in $U$ and vertex $j$ in $V$, then $0 < \sigma_{ij} \le 1$. Otherwise $\sigma_{ij} = 0$. Note that the two regularity conditions in Assumption \ref{assum:assum2} imply that for any vertex in $G$, at least one edge connects it with the rest part of $G$. Next we present a blocked Gibbs sampling algorithm over a bipartite graph $G$. In Algorithm \ref{algo:algo2}, $\mbX^k_i$ denotes the sample on vertex $i$ in the $k$-th iteration. 
\begin{algorithm}[H]
    \caption{A Gibbs sampler over a bipartite graph}
    \begin{algorithmic}\label{algo:algo2}
    \STATE \textbf{Input: } $\pi^{\mbX\mbY} \propto \exp[-\sum_{i=1}^n f_i(x_i)-\sum_{j=1}^m g_j(y_j)-\sum_{i=1}^n\sum_{j=1}^m\frac{\sigma_{ij}}{2\eta} \Vert x_i-y_j \Vert^2_2]$: the target distribution 
    \STATE $\mbX^0$: an initial sample of $\mbX$ drawn from $\mu_0^{\mbX}$
    \STATE \textbf{Output: } $({\mbX}^K,{\mbY}^K)$: samples that approximate the target distribution $\pi^{\mbX\mbY}$
    \FOR{$k \leftarrow 0,\cdots, K$}
        \FOR{$j \leftarrow 1,\cdots, m$}
        \STATE Step 1: Sample $\mbY^k_j \sim \pi^{\mbY_j|\mbX=\mbX^k}\propto  \exp \left[-g_j(y_j)-\sum_{i=1}^{n}\frac{\sigma_{ij}}{2\eta}\Vert \mbX_i^k-y_j\Vert^2_2\right] $.
        \ENDFOR
        \FOR{$i \leftarrow 1,\cdots, n$}
        \STATE Step 2: Sample $\mbX^{k+1}_i \sim \pi^{\mbX_i|\mbY=\mbY^k}\propto  \exp \left[-f_i(x_i)-\sum_{j=1}^{m}\frac{\sigma_{ij}}{2\eta}\Vert x_i-\mbY_j^k\Vert^2_2\right] $.
        \ENDFOR
    \ENDFOR
\end{algorithmic}
\end{algorithm}
%\chen{discuss the distributed nature of this algorithm} 
One can see that Step 1 and Step 2 in the above Algorithm \ref{algo:algo2} can be executed in a distributed manner thanks to the bipartite graph structure and Gibbs samplers. 
Notice that in Step 1, 
\begin{equation} \label{eq:onestepingraph}
    \exp \left[-g_j(y_j)-\sum_{i=1}^{n}\frac{\sigma_{ij}}{2\eta}\Vert \mbX_i^k-y_j\Vert^2_2\right] \propto \exp \left[-g_j(y_j)-\frac{\sum_{i=1}^n\sigma_{ij}}{2\eta}\left\Vert y_j-\frac{\sum_{i=1}^n\sigma_{ij}\mbX_i^k}{\sum_{i=1}^n\sigma_{ij}}\right\Vert^2_2\right],
\end{equation}
so we can employ a similar framework in Section \ref{section:twonodes} to analyze this general case, which also holds for Step 2. More specifically, in Section \ref{section:twonodes}, we construct a diffusion process whose initial distribution is the marginal of $\pi^{\mbX\mbY}$, while in this section, the initial distribution is the pushforward measure of marginals.
\begin{theorem}\label{theorem:bipartite}
Denote the probability distribution of $(\mbX^k,\mbY^k)$ by $\mu^{\mbX\mbY}_k$. Then under Assumption \ref{assum:assum2}, for the target distribution \eqref{eq:target}, we have
\begin{equation}\nonumber
    D_{\rm KL}(\mu_k^{\mbX\mbY} \| \pi^{\mbX\mbY}) \leq \exp(-kC)D_{\rm KL}(\mu_0^{\mbX} \| \pi^{\mbX})
\end{equation}
with $C$ being defined as 
\begin{align}\label{eq:C}
    \begin{split}
        C & := \frac{\eta}{n}\int_0^1  \left[\frac{\eta t(1-t)}{\min_j\sum_{i=1}^n\sigma_{ij}}+\frac{mn(1-t)^2}{\min_i \alpha_{f_i}}+\frac{t^2}{\min_j \alpha_{g_j}}\right]^{-1} \rd t\\
        & \quad +  \frac{\eta}{m}\int_0^1  \left[\frac{\eta t(1-t)}{\min_i\sum_{j=1}^m\sigma_{ij}}+\frac{mn(1-t)^2}{\min_j \alpha_{g_j}}+\frac{t^2}{\min_i \alpha_{f_i}}\right]^{-1} \rd t.
    \end{split}
\end{align}
%\chen{define $C$ here. the reader should understand the theorem without reading the proof.}
\end{theorem}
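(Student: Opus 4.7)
The plan is to replicate the three-step structure of the proof of Theorem 3.4 --- construction of an auxiliary diffusion via Lemma 3.5, LSI-based strong data processing via Theorem 3.3, and combination with the KL chain rule --- adapted to the block-Gibbs update of Algorithm 2. Two structural features of the bipartite setting drive the generalization. First, by the reformulation in equation (\ref{eq:onestepingraph}), each conditional update $\mathbf{Y}_j \mid \mathbf{X}$ depends on $\mathbf{X}$ only through the weighted average $\bar{\mathbf{X}}^{(j)} := (\sum_i \sigma_{ij} \mathbf{X}_i)/(\sum_i \sigma_{ij})$ and behaves as in the two-node setting with effective scale $\eta_j := \eta/(\sum_i \sigma_{ij})$. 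Second, within each block update the coordinates $\mathbf{Y}_j$ are conditionally independent given $\mathbf{X}$, so the joint update is naturally a product of $m$ Schr\"odinger-bridge-type diffusions coupled only through their initial distribution.

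For the Step 1 analysis, let $T : \mathbb{R}^{nd} \to \mathbb{R}^{md}$ denote the deterministic map sending $\mathbf{x}$ to $(\bar{\mathbf{x}}^{(1)}, \ldots, \bar{\mathbf{x}}^{(m)})$. I will construct a diffusion in $\mathbb{R}^{md}$ with initial distribution the pushforward $T_\# \pi^{\mathbf{X}}$ (or $T_\# \mu_k^{\mathbf{X}}$) and terminal distribution $\pi^{\mathbf{Y}}$ (or $\mu_k^{\mathbf{Y}}$), assembled block-by-block from the per-$j$ diffusion of Lemma 3.5 with parameter $\eta_j$. Its diffusion matrix $\boldsymbol{\sigma}_t$ is block-diagonal with blocks $\sqrt{\eta_j}\,\ide$, so Lemma 3.2 applies with uniform scale $\lambda_t^2 = \min_j \eta_j$. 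Because each block's drift depends only on the conditional $\pi^{\mathbf{Y}_j \mid \bar{\mathbf{X}}^{(j)}}$ --- shared between the target and the current iterate --- the same SDE transports both initial distributions, so strong data processing along it controls the KL contraction.

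To apply Theorem 3.3, I will bound the LSI constant $\alpha_t$ of the joint time-$t$ marginal. Each per-block density $\phi_t^{(j)} \hat\phi_t^{(j)}$ is strongly log-concave by Saumard-Wellner Theorem 3.7 (with constant depending on $\eta_j$, $t$, and the endpoint strong-convexity parameters), and the full joint marginal inherits a uniform LSI constant after accounting for the coupling across blocks introduced by $T_\# \pi^{\mathbf{X}}$. Using worst-case bounds --- $\alpha_{f_i} \geq \min_i \alpha_{f_i}$, $\alpha_{g_j} \geq \min_j \alpha_{g_j}$, and $\sum_i \sigma_{ij} \geq \min_j \sum_i \sigma_{ij}$ --- and tracking the dimensional factors introduced by the weighted-averaging in $T$ (whose combination of averaging over $n$ source vertices and summation over $m$ target blocks produces the $mn$ factor), one arrives at the integrand of the first summand of $C$. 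Integrating over $t \in [0,1]$ with $\lambda_t^2$ yields the Step 1 half-iteration contraction.

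The Step 2 analysis is symmetric under $f \leftrightarrow g$ and $n \leftrightarrow m$, producing the second summand of $C$. Multiplying the two half-iteration contractions gives the per-iteration factor $\exp(-C)$; chaining over $k$ iterations and invoking the KL chain rule used at the end of the proof of Theorem 3.4 to convert the joint KL into $D_{\mathrm{KL}}(\mu_0^{\mathbf{X}} \| \pi^{\mathbf{X}})$ completes the argument. The main obstacle I anticipate is the LSI estimate for the coupled joint time-$t$ marginal: since $T_\# \pi^{\mathbf{X}}$ is not a product measure, per-block LSI constants cannot simply be added, and sharply tracking how the weighted averaging in $T$ degrades the effective curvature --- which is what produces the precise $mn$ dimension factor --- is the delicate step.
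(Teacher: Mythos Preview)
Your overall architecture --- KL data-processing to the pushforward $T_\#\mu_k^{\mathbf X}$ via \eqref{eq:KLA}, the product Schr\"odinger diffusion with block-diagonal $\boldsymbol{\sigma}_t$ and $\lambda_t^2=\min_j\eta_j\ge\eta/n$, strong data processing via Theorem~\ref{theorem:dataprocessing}, the symmetric analysis for Step~2, and the KL chain rule at the end --- matches the paper exactly. The gap is precisely where you flag it: the LSI constant of the joint time-$t$ marginal $\pi_t$. But your proposed attack, per-block strong log-concavity of ``$\phi_t^{(j)}\hat\phi_t^{(j)}$'' via Saumard--Wellner Theorem~3.7 followed by a coupling correction, does not get off the ground. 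The forward factor $\phi_t^{(j)}$ is indeed defined blockwise (it depends only on $g_j$ and $\eta_j$), but there is no per-block $\hat\phi_t^{(j)}$: in Lemma~\ref{lemma:diffusion} the backward factor is determined by the initial law, and here that initial law is $T_\#\pi^{\mathbf X}$, which is coupled across all $j$ and in general is not even absolutely continuous on $\mathbb{R}^{md}$. There is no product $\pi_t=\prod_j\phi_t^{(j)}\hat\phi_t^{(j)}$ to start from and then perturb, so the two-node argument does not lift blockwise.

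The paper's route (Proposition~\ref{prop:prop1}) avoids any block decomposition of $\pi_t$. It writes $\pi_t(\mathbf z)$ as the $\mathbf z$-marginal of the explicit joint density
\[
\pi_t(\mathbf x,\mathbf y,\mathbf z)\;\propto\;\pi^{\mathbf{XY}}(\mathbf x,\mathbf y)\,\prod_{j=1}^m\exp\!\Bigl(-\tfrac{\sum_i\sigma_{ij}}{2\eta t(1-t)}\bigl\|z^j-ty_j-(1-t)\bar{\mathbf x}^{(j)}\bigr\|^2\Bigr),
\]
the second factor being the Brownian-bridge law of $\mathbf Z_t$ given its endpoints. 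The negative log splits as a convex piece (the de-convexified $\hat f_i,\hat g_j$ and the cross terms $\|x_i-y_j\|^2$) plus an explicit quadratic $G=\tfrac12(\mathbf z,\mathbf x,\mathbf y)M^T\Lambda M(\mathbf z,\mathbf x,\mathbf y)^T$. Saumard--Wellner Theorem~3.8 (marginals of strongly log-concave are strongly log-concave, with constant read off the Schur complement) then gives the strong log-concavity constant of the $\mathbf z$-marginal as the reciprocal of the largest eigenvalue of the top-left block of $(M^T\Lambda M)^{-1}$. Woodbury identifies that block as $\Lambda_1^{-1}+(1-t)^2A^T\Lambda_2^{-1}A+t^2\Lambda_3^{-1}$, and the $mn$ factor you anticipated emerges from the crude bound $\|\sqrt{\Lambda_2^{-1}}A\|_{\mathrm{op}}^2\le mn\,\|\sqrt{\Lambda_2^{-1}}A\|_{\max}^2\le mn/\min_i\alpha_{f_i}$ on the averaging matrix. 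This single matrix computation is the only genuinely new ingredient beyond the two-node proof, and it is the concrete step your plan is missing.
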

\begin{proof}
The proof of Theorem \ref{theorem:bipartite} is essentially a generalization of the proof of the two-node case in Section \ref{section:twonodes}. Inspired by \eqref{eq:onestepingraph}, we compare $D_{\mathrm{KL}}(\mu_k^{A\mbX} || \pi^{A\mbX})$ and $D_{\mathrm{KL}}(\mu_k^{\mbX} || \pi^{\mbX})$ where the $j$-th entry of $A\mbX = \nicefrac{(\sum_{i=1}^n\sigma_{ij}\mbX_i^k)}{(\sum_{i=1}^n\sigma_{ij})}$. According to  \eqref{eq:KLA} in Lemma \ref{lemma:KL}, we have 
\begin{equation}\label{eq:bipartite_1}
    D_{\mathrm{KL}}(\mu_k^{A\mbX} \| \pi^{A\mbX}) \leq D_{\mathrm{KL}}(\mu_k^{\mbX} \| \pi^{\mbX}).
\end{equation} 
Then by Lemma \ref{lemma:diffusion} and \eqref{eq:onestepingraph}, the transition occurring in Step 1 can be described by the following diffusion processes, %\chen{should be $+\eta_j\nabla \log \phi^j_t(Z^j_t)$}
\begin{equation}\nonumber
    \rd Z^j_t = \eta_j\nabla \log \phi^j_t(Z^j_t)\rd t + \sqrt{\eta_j} \rd W_t,\ Z^j_0 = \frac{\sum_{i=1}^n\sigma_{ij}\mbX_i^k}{\sum_{i=1}^n\sigma_{ij}}, \: \forall  j=1,2,\ldots,m.
\end{equation}
%Here {\yuan{$\phi^j_t(z) = $} and} 
Here $\eta_j = \nicefrac{\eta}{\sum_{i=1}^n \sigma_{ij}}$.
Let $\mbZ_t=(Z_t^1,\ldots,Z_t^m)$. %\chen{why $\mbX_0$ not $\mbX_k$?}
Denote the distribution of $\mbZ_t$ as $\pi_t$ if $\mbX^k \sim \pi^\mbX$. Clearly, $\pi_0 \sim \pi^{A\mbX}$ and $\pi_1 \sim \pi^{\mbY}$. It is noteworthy to mention that typically $A\mbX$ is not absolutely continuous w.r.t. Lebesgue measure, but $\pi_t$ still has a density representation.
By Lemma \ref{lemma:diffusion},  the density of $\pi_t$ satisfies
\begin{align}\nonumber
    \begin{split}
        \pi_t  &= \int \pi(\mbZ_t |\mbX,\mbY)\rd \pi^{\mbX \mbY}
        = \int \pi(\mbZ_t |A\mbX,\mbY) \rd \pi^{\mbX \mbY} \\
        &\propto \int \exp\left[-\sum_{i=1}^n f_i(x_i)-\sum_{j=1}^m g_j(y_j)-\sum_{i=1}^n\sum_{j=1}^m\frac{\sigma_{ij}}{2\eta} \Vert x_i-y_j \Vert^2_2\right] \\
        & \quad\exp\left[-\sum_{j=1}^{m}\frac{\sum_{i=1}^n\sigma_{ij}}{2t(1-t)\eta}\left\Vert z_t^j-ty_j-(1-t) \frac{\sum_{i=1}^n\sigma_{ij}x_i}{\sum_{i=1}^n\sigma_{ij}}\right\Vert_2^2\right]\rd \mbX \rd \mbY.
    \end{split}
\end{align}
By Proposition \ref{prop:prop1}, we have $\pi_t$ is strongly log-concave with a coefficient 
\begin{align}\label{eq:c_t}
    \begin{split}
            c_t &=  \left[\frac{\eta t(1-t)}{\min_j\sum_{i=1}^n\sigma_{ij}}+\frac{mn(1-t)^2}{\min_i \alpha_{f_i}}+\frac{t^2}{\min_j \alpha_{g_j}}\right]^{-1}.
    \end{split}
\end{align}
Observe that $\eta_j \geq \frac{\eta}{n}, \: \forall j$. Hence, by Theorem \ref{theorem:dataprocessing} and a lower bound of $c_t$ in \eqref{eq:c_t}  we obtain 
% \chen{We can get the closed-form for $\int_0^1 c_t$; no need to use the bound \eqref{eq:c_t}}
\begin{align}\nonumber
    \begin{split}
         D_{\mathrm{KL}} (\mu_k^{\mbY} || \pi^{\mbY}) &\le \exp\left(-\frac{\eta}{n}\int_0^1 c_t  \, \rd t\right) D_{\mathrm{KL}}(\mu_k^{A\mbX} \| \pi^{A\mbX})\\
         & \le \exp\left(-\frac{\eta}{n}\int_0^1 c_t \rd t \right)D_{\mathrm{KL}}(\mu_k^{\mbX} \| \pi^{\mbX})
    \end{split}
\end{align}
where the last inequality is from \eqref{eq:bipartite_1}.
By a symmetry analysis, one can get
\begin{align}\nonumber
    \begin{split}
         D_{\mathrm {KL}} (\mu_{k+1}^{\mbX} || \pi^{\mbX}) &\le \exp\left(-\frac{\eta}{m}\int_0^1  \left[\frac{\eta t(1-t)}{\min_i\sum_{j=1}^m\sigma_{ij}}+\frac{mn(1-t)^2}{\min_j \alpha_{g_j}}+\frac{t^2}{\min_i \alpha_{f_i}}\right]^{-1} \, \rd t\right)  D_{\mathrm{KL}}(\mu_k^{\mbY} \| \pi^{\mbY}).
    \end{split}
\end{align}
Lastly, with \eqref{eq:marginal} in Lemma \ref{lemma:KL},
\begin{equation}\nonumber
    D_{\mathrm{KL}}(\mu_k^{\mbX\mbY} \| \pi^{\mbX\mbY}) \leq \exp(-kC)D_{\mathrm{KL}}(\mu_0^{\mbX} \| \pi^{\mbX})
\end{equation}
where the linear convergence rate $C$ is defined in \ref{eq:C}.
\end{proof}
%\chen{discuss this result}
This convergence rate implies that the convergence depends on the worst strongly-convexity constant. For the impact of $\eta$, we have the same observation as in the two-node case,\textit{ i.e.}, larger $\eta$ implies faster convergence speed.   
% \begin{remark}
    The convergence rate in Theorem \ref{theorem:bipartite} is not tight. For users who know the exact structure of the bipartite graph, the strong convexity constant $c_t$ can be further improved and thereby the convergence rate $\exp(-C)$. The tight log-Sobolev constant under the strong convexity assumption is the convexity constant of $\mbZ$-marginal distribution in Proposition \ref{prop:prop1}. A naive way to find the tight constant is to compute the Hessian matrix of the potential of joint density $\pi_t(\mathbf{x},\mathbf{y},\mathbf{z})$. The lack of tightness is partially due to the matrix inequalities in the proof of Proposition \ref{prop:prop1}.
% \end{remark}

\section{Convergence analysis with small $\eta$ for composite sampling}\label{section:small_eta}
Inspired by the success of the proximal sampling framework, we postulate that if $\eta$ is sufficient small, our target distribution \eqref{eq:target} would converge to 
\begin{equation}\nonumber
    \exp\left(-\sum_{i=1}^n f_i(x)-\sum_{j=1}^m g_j(x)\right).
\end{equation}
Due to the difficulty of the non-asymptotic behavior of \eqref{eq:target}, in this section, we focus on the two-node case. In this case, with the additional Gaussian component, \eqref{eq:target_twonodes} is a better-conditioned distribution than $\exp(-f(x)-g(x))$. A similar observation was initially  proposed by \citet{lee2021structured} and induces efficient sampling algorithms for structured log-concave distributions. The section is also a direct generalization of the one-node case in ~\citet{vono2022efficient}. For \eqref{eq:target_twonodes}, we will show the marginal of our target density \eqref{eq:target_twonodes} actually converges to the composite density with a sufficiently small $\eta$, i.e.,
\begin{equation}\nonumber
    \lim_{\eta \rightarrow 0} {\mathrm{TV}}(\pi_{\eta}^X,\nu)=0
\end{equation}  where $\pi_{\eta}^X$ is the $X$-marginal distribution of \eqref{eq:target_twonodes} and $\nu \propto \exp[-f(x)-g(x)]$.
Intuitively, if $\eta$ is small, then the density of $\pi_{\eta}$ will concentrate on the area, $x=y$. Therefore the marginal distribution of $\pi_{\eta}$ would approximately be $\nu$. To begin with, we prove the asymptotic property in the following Proposition \ref{prop:eta2}.
\begin{proposition}\label{prop:eta2}
Assume $\exp(-g) \in L_1(\bbR^d)$ and $\exp(-f)$ is essentially bounded from above, then for
\begin{equation}\nonumber
\pi^{X}_{\eta} \propto \int\exp\left[-f(x)-g(y)-\frac{1}{2\eta}\|x-y\|^2_2\right] \rd y
\end{equation} and
\begin{equation}\nonumber
\nu \propto \exp\left[-f(x)-g(x)\right],
\end{equation}
we have 
    \begin{equation}\nonumber
        \lim_{\eta\rightarrow 0} \mathrm{TV}(\pi_\eta^X,\nu) =0.
    \end{equation}
\end{proposition}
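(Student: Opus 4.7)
The plan is to recognize $\pi_\eta^X$ as a Gaussian mollification of $\nu$ and then invoke the standard approximation-of-identity result in $L^1$. Let $G_\eta(z) := (2\pi\eta)^{-d/2}\exp(-\|z\|_2^2/(2\eta))$ denote the Gaussian density with covariance $\eta\ide$. After multiplying the integral in the numerator and the double integral in the (implicit) normalizer by $(2\pi\eta)^{-d/2}$, I can rewrite
\begin{equation*}
\pi_\eta^X(x) = \frac{h_\eta(x)}{Z_\eta}, \qquad h_\eta(x) := e^{-f(x)}\bigl(G_\eta * e^{-g}\bigr)(x), \qquad Z_\eta := \int h_\eta(x)\,\rd x,
\end{equation*}
and likewise $\nu = h/Z$ with $h(x) := e^{-f(x)-g(x)}$ and $Z := \int h(x)\,\rd x$.

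The first preparatory step is to verify that both normalizing constants are finite and strictly positive. Since $e^{-f}$ is essentially bounded by some $M<\infty$, Fubini gives $Z_\eta, Z \le M\|e^{-g}\|_1 < \infty$, and strict positivity is immediate from positivity of the integrands. Next, the crux is to establish $L^1$ convergence $h_\eta \to h$. Essential boundedness of $e^{-f}$ gives the pointwise bound
\begin{equation*}
|h_\eta(x) - h(x)| = e^{-f(x)}\bigl|(G_\eta * e^{-g})(x) - e^{-g}(x)\bigr| \le M\,\bigl|(G_\eta * e^{-g})(x) - e^{-g}(x)\bigr|,
\end{equation*}
so it suffices to show $\|G_\eta * e^{-g} - e^{-g}\|_1 \to 0$. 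This is a classical fact: $\{G_\eta\}_{\eta>0}$ is an approximate identity (probability densities with $\int_{\|z\|>\delta} G_\eta\,\rd z \to 0$ for every $\delta>0$), and convolution with an approximate identity converges in $L^1$ norm for any $L^1$ function, applied here to $e^{-g}$.

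The final step lifts the convergence of the unnormalized densities to TV convergence of the normalized ones. Using $|Z_\eta - Z| \le \|h_\eta - h\|_1$, the triangle inequality yields
\begin{equation*}
\|\pi_\eta^X - \nu\|_1 \le \tfrac{1}{Z_\eta}\|h_\eta - h\|_1 + \tfrac{1}{Z_\eta}|Z_\eta - Z| \le \tfrac{2}{Z_\eta}\|h_\eta - h\|_1,
\end{equation*}
and since $Z_\eta \to Z > 0$ while $\|h_\eta - h\|_1 \to 0$, the right-hand side vanishes; because $\mathrm{TV}(\pi_\eta^X,\nu) = \tfrac{1}{2}\|\pi_\eta^X - \nu\|_1$, this gives the claim.

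The argument is essentially soft analysis and the only nontrivial input is the $L^1$ continuity of Gaussian mollification; no regularity or convexity of $f,g$ is used beyond what the statement assumes. If there is a genuine obstacle, it is simply to confirm that the hypotheses ($e^{-g}\in L^1$ and $e^{-f}$ essentially bounded) are exactly what is needed both to pull $e^{-f}$ out as a pointwise factor and to invoke the mollification lemma; relaxing either hypothesis would require a separate truncation or concentration argument.
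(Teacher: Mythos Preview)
Your proposal is correct and follows essentially the same approach as the paper: both recognize the unnormalized marginal as $e^{-f}$ times a Gaussian mollification of $e^{-g}$, use $e^{-g}\in L^1$ to get $\|G_\eta*e^{-g}-e^{-g}\|_1\to 0$, and use essential boundedness of $e^{-f}$ to transfer this to the full unnormalized density. The only cosmetic difference is the final step: the paper records a.e.\ convergence of the unnormalized density together with convergence of its integral and then invokes Scheff\'e's lemma, whereas you go directly through the $L^1$ bound $\|h_\eta-h\|_1\le M\|G_\eta*e^{-g}-e^{-g}\|_1$ and a triangle-inequality estimate on the normalizers; your route is slightly more elementary and avoids the separate a.e.\ step.
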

%\chen{bounded above or below?}\yuan{Bounded above. KL divergence? It's true for graphs.}
\begin{proof}
 Under the condition that $\exp(-g) \in L_1(\bbR^d)$, one can show the convolution $\exp(-g) * \mN(0,\eta\ide)$ converges to $\exp(-g)$ almost everywhere (Theorem 1 in \citet{bear1979approximate}), which yields
\begin{equation}\label{eq:eq_asympoticial_1}
    \lim_{\eta \rightarrow 0} \frac{1}{\sqrt{2\pi\eta}}\int \exp\left[-f(x)-g(y)-\frac{1}{2\eta}\|x-y\|_2^2\right]\rd y  = \exp[-f(x)-g(x)] \quad a.e.
\end{equation}
Meanwhile, as  $\exp(-g) \in L_1(\bbR^d)$, by a standard argument (Theorem 9.1.11 in \citet{Hei19}), one has $\|\exp(-g) * \mN(0,\eta\ide)-\exp(-g)\|_1 \rightarrow 0$. As $\exp(-f)$ is essentially bounded from above, it follows that
\begin{equation}\label{eq:eq_asympoticial_2}
     \lim_{\eta \rightarrow 0} \frac{1}{\sqrt{2\pi\eta}}\int \exp\left[-f(x)-g(y)-\frac{1}{2\eta}\|x-y\|_2^2\right]\rd y\rd x  = \int\exp[-f(x)-g(x)]\rd x.
\end{equation}
With \eqref{eq:eq_asympoticial_1}, \eqref{eq:eq_asympoticial_2} and  Scheff\`{e}'s lemma \citep{scheffe1947useful}, we have
\begin{equation}\nonumber
    \lim_{\eta \rightarrow 0}\mathrm{TV}(\pi_\eta^X,\nu) = 0.
\end{equation}
\end{proof}
However, the non-asymptotic behaviour of $\pi_{\eta}^{X} \rightarrow \nu$ is challenging. In this section, we summarize and analyze four special cases to promote the study of it. The idea of bounding the distance of $\pi_{\eta}^X$ and $\nu$ is initially proposed in \citet{lee2021structured}, which we summarize in Example \ref{ex:ex1}.
\begin{example}\label{ex:ex1}
Consider a joint distribution,
\begin{equation}\nonumber
    \pi_{\eta}^{XY} \propto \exp[-f(x)-g(y)-\nicefrac{1}{2\eta}\Vert x-y\Vert^2_2].
\end{equation}
Assume $g(y)=0$. Then $\pi^{X}_{\eta} \propto \exp[-f(x)]$. In this case, the $X$-marginal distribution is exactly $\nu$.
\end{example}
To proceed, we analyze a symmetric case where $f(x) = 0$. This example resembles the setting of ~\citet{vono2022efficient}, and we establish a better dimension dependency than~\citet{vono2022efficient}: from $d$ to $d^{1/2}$. 
%\chen{DELETE One can read~\citet{vono2022efficient} for convergence measured by 1-Wasserstein distance and additional assumptions.}
\begin{example}\label{ex:ex2}
%\yuan{We may need to move the proof to appendix} \chen{yes}
Consider a joint distribution,
\begin{equation}\nonumber
    \pi_{\eta}^{XY} \propto \exp[-f(x)-g(y)-\nicefrac{1}{2\eta}\Vert x-y\Vert^2_2]
\end{equation} where $x, y \in \bbR^d$.
Assume $f(x)=0$ and $g$ is $\alpha$-strongly convex and $\beta$-smooth. Then
\begin{equation}\nonumber
    D_{\mathrm{KL}}(\pi_\eta^X|| \nu) \leq \frac{d\alpha}{2\beta}\left[\exp\left(\frac{2\beta^2\eta}{\alpha}\right)-1\right].
\end{equation}
Moreover, with Pinsker's inequality, we obtain
\begin{equation}\nonumber
    {\mathrm{TV}}(\pi_\eta^X, \nu) \leq \sqrt{\frac{d\alpha}{4\beta}\left[\exp\left(\frac{2\beta^2\eta}{\alpha}\right)-1\right]}.
\end{equation}
\end{example}
\begin{proof}
 Appendix \ref{subappendix:ex2}.
\end{proof}
Even though  we already find the convergence rate for the particular cases with only one node, \textit{i.e.}, Example \ref{ex:ex1} and \ref{ex:ex2}, the analysis for the multi-variable case is demanding.  It is worth noting that instead of using the potential $f(x)+g(y)+\frac{1}{2\eta}\Vert x-y \Vert_2^2$, \citet{lee2021structured} constructs a new potential with an additional quadratic term,  $f(x)+g(y)+\frac{1}{2\eta}\Vert x-y \Vert_2^2+\frac{\eta \alpha_f^2}{2}\Vert x-x^* \Vert_2^2$, where $x^*$ is the minimizer of $f+g$. Our analysis is different from this previous work in two aspects. First, our target density does not include the $L_2$ distance of $x$ and $x^*$. Secondly, we find that without the additional term,  it is impossible to uniformly bound the total variation  distance for strongly convex and smooth potentials by a single $\eta$. Thus, for the non-asymptotic bound, we illustrate a necessary condition by Example \ref{ex:ex3}.
% \yuan{ but once each $f$ and $g$ is given, $\lim_{\eta \rightarrow 0} D_{\mathrm{KL}}(\pi_{\eta}^x||\nu)=0 $ holds. This implies  one analysis is still applicable to real applications.}
\begin{example}\label{ex:ex3}
Consider a joint distribution,
\begin{equation}\nonumber
    \pi_{\eta}^{XY} \propto \exp[-f(x)-g(y)-\nicefrac{1}{2\eta}( x-y)^2].
\end{equation}
Let $f(x) = (x-u_1)^2$ and $g(x) = (x - u_2)^2$. Then 
 \begin{align}\nonumber
    \begin{split}
     \pi_\eta^X &= \mN\left(\frac{u_2+(2\eta+1)u_1}{2\eta+2},\frac{2\eta+1}{4\eta+4}\right), \\
     \nu &= \mN\left(\frac{u_1+u_2}{2},\frac{1}{4}\right).
    \end{split}
\end{align}
By Theorem 1.3 in \citet{devroye2018total}, we obtain  
\begin{equation}\nonumber
    {\mathrm{TV}}(\pi_\eta^X, \nu) \ge \min \left\{\frac{\eta|u_1-u_2|}{5\eta+5}, 1/200\right\}.
\end{equation}
This means the convergence rate relies on $|u_1-u_2|$.
Note that even with other metrics, the term $|u_1-u_2|$ would affect the non-asymptotic behavior. For instance, 
\begin{equation}\nonumber
    D_{\mathrm{KL}}(\nu || \pi_\eta^X) = \frac{1}{2}\log\frac{2\eta+1}{\eta+1} -\frac{\eta}{4\eta+2}+\frac{\eta^2(u_1-u_2)^2}{(2\eta+1)(2\eta+2)},
\end{equation}
and 
\begin{equation}\nonumber
    W_2^2(\pi_\eta^X,\nu) = \left(\sqrt{\frac{2\eta+1}{4\eta+4}} - \frac{1}{2}\right)^2 + \frac{\eta^2(u_1-u_2)^2}{(2\eta+2)^2}.
\end{equation}
\end{example}
Hence, to bound the distance of $\pi_\eta^X$ and $\nu$, we make Assumption \ref{assu:assu3}.
\begin{assumption}\label{assu:assu3}
Assume 
    $f$ and $g$ share the same minimizer, i.e., $\nabla f(x^*) = \nabla g(x^*)=0$.
\end{assumption}
\begin{remark}
    The condition that $f$ and $g$ share the same minimizer can be achieved by shifting $f$ and $g$ by linear terms \citep{lee2021structured}, i.e., $\bar{f}(x) = f(x)-\langle \nabla f(x^*),x \rangle$ and $\bar{g}(x) = g(x)+\langle \nabla f(x^*),x \rangle$ where $x^*$ is the stationary point of $f+g$. Notice that $\exp[-f(x)-g(x)] = \exp[-\bar{f}(x)-\bar{g}(x)]$ and $\nabla \bar{f}(x^*) = \nabla \bar{g}(x^*)$.
\end{remark}
\begin{proposition}\label{proposition:quadratic}
%\yuan{We may need to move the proof to the appendix} \chen{yes}
    Consider a joint distribution,
    \begin{equation}\nonumber
        \pi_{\eta}^{XY} \propto \exp[-f(x)-g(y)-\nicefrac{1}{2\eta}\Vert x-y\Vert^2_2].
    \end{equation}
    Assume $g(y) = \frac{1}{2\sigma^2}\|y-u\|_2^2$
    and $f(x)$ is $\alpha_f$-strongly convex.
    Then with Assumption \ref{assu:assu3}, we have
    \begin{equation}\nonumber
         D_{\mathrm{KL}}(\nu|| \pi_{\eta}^X) \le \frac{d\eta^2}{2(\alpha_f\sigma^4+\sigma^2)^2}.
    \end{equation}
    %\andre{why not state in KL?}
    Moreover, by Pinsker's inequality, we have
    \begin{equation}\nonumber
        {\mathrm{TV}}(\pi_\eta^X, \nu) \leq \frac{\eta\sqrt{d}}{2(\alpha_f\sigma^4+\sigma^2)}.
    \end{equation}
\end{proposition}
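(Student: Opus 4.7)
The plan is to reduce $\pi_\eta^X$ to an explicit form by marginalizing out $y$, then control $D_{\mathrm{KL}}(\nu \| \pi_\eta^X)$ via a logarithmic Sobolev inequality applied to $\pi_\eta^X$, with the resulting Fisher information estimated by a second-moment bound on $\nu$.

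First, since $g$ is purely quadratic in $y$, I would complete the square in the exponent of $\pi_\eta^{XY}$ and evaluate the resulting Gaussian integral in $y$ to obtain
\[
\pi_\eta^X(x) \propto \exp\Bigl[-f(x) - \tfrac{1}{2(\sigma^2+\eta)}\|x-u\|^2\Bigr].
\]
Assumption~\ref{assu:assu3} combined with $\nabla g(x^*) = \sigma^{-2}(x^*-u) = 0$ forces $x^* = u$, so $u$ is simultaneously the minimizer of $f$ and of the potentials $V_1(x) := f(x) + \tfrac{1}{2\sigma^2}\|x-u\|^2$ of $\nu$ and $V_2(x) := f(x) + \tfrac{1}{2(\sigma^2+\eta)}\|x-u\|^2$ of $\pi_\eta^X$. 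Both potentials are strongly convex with moduli $\rho_1 = \alpha_f + 1/\sigma^2$ and $\rho_2 = \alpha_f + 1/(\sigma^2+\eta)$, so in particular $\pi_\eta^X$ satisfies LSI$(\rho_2)$, which yields $D_{\mathrm{KL}}(\nu\|\pi_\eta^X) \le \tfrac{1}{2\rho_2} I(\nu\|\pi_\eta^X)$.

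Next I would compute the Fisher information explicitly. Since $V_1 - V_2 = \tfrac{a}{2}\|x-u\|^2 + \text{const}$ with $a := \tfrac{1}{\sigma^2} - \tfrac{1}{\sigma^2+\eta} = \tfrac{\eta}{\sigma^2(\sigma^2+\eta)}$, we have $\nabla \log(\nu/\pi_\eta^X)(x) = -a(x-u)$, so $I(\nu\|\pi_\eta^X) = a^2 \, \bbE_\nu[\|X-u\|^2]$. To bound the second moment I would use integration by parts: the identity $\nabla V_1(x) e^{-V_1(x)} = -\nabla e^{-V_1(x)}$ and a coordinate-wise integration by parts give $\bbE_\nu[\langle \nabla V_1(X), X-u\rangle] = d$, while $\rho_1$-strong convexity of $V_1$ with $\nabla V_1(u) = 0$ delivers $\langle \nabla V_1(x), x-u\rangle \ge \rho_1\|x-u\|^2$. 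Combining the two yields $\bbE_\nu[\|X-u\|^2] \le d/\rho_1$.

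Assembling the pieces,
\[
D_{\mathrm{KL}}(\nu \| \pi_\eta^X) \le \frac{a^2 d}{2\rho_1\rho_2} = \frac{d\eta^2}{2\sigma^2(\sigma^2+\eta)(\alpha_f\sigma^2+1)(\alpha_f(\sigma^2+\eta)+1)}.
\]
A one-line simplification using $(\sigma^2+\eta)(\alpha_f(\sigma^2+\eta)+1) \ge \sigma^2(\alpha_f\sigma^2+1)$, immediate from $\eta \ge 0$, reduces the right-hand side to $\tfrac{d\eta^2}{2\sigma^4(\alpha_f\sigma^2+1)^2} = \tfrac{d\eta^2}{2(\alpha_f\sigma^4+\sigma^2)^2}$, and Pinsker's inequality then gives the stated TV bound. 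The main technical point is the moment estimate $\bbE_\nu[\|X-u\|^2] \le d/\rho_1$: $u$ is only the minimizer of $V_1$, not the mean of $\nu$, so a naive Brascamp--Lieb covariance bound would leave a bias term $\|\bbE_\nu[X] - u\|^2$ to be controlled; the integration-by-parts identity bypasses this issue entirely and is what makes the argument clean.
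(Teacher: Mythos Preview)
Your proof is correct and considerably more direct than the paper's. The paper builds a continuous interpolation $\pi_t \propto \exp[-f(x)-g_t(x)]$ for $t\in(0,\eta]$, computes $K(t)=\partial_t D_{\mathrm{KL}}(\pi_0\|\pi_t)$ and $\dot K(t)$, and derives a differential inequality $\dot K(t)\le -\tfrac{2}{\sigma^2+t}K(t)+\tfrac{d}{(\alpha_f\sigma^4+\sigma^2)^2}$ using the Poincar\'e inequality for $\pi_t$ together with the moment bound $\bbE_{\pi_t}\|x-u\|^2\le d/(\alpha_f+\tfrac{1}{\sigma^2+t})$; integrating from the boundary value $K(0^+)=0$ gives the result. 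You instead exploit that for quadratic $g$ the marginal $\pi_\eta^X$ is explicit, so $\log(\nu/\pi_\eta^X)$ is a pure quadratic in $x-u$, and a single application of LSI$(\rho_2)$ with the moment bound $\bbE_\nu\|X-u\|^2\le d/\rho_1$ closes the argument in one step. Your intermediate bound $\tfrac{d\eta^2}{2\sigma^2(\sigma^2+\eta)(\alpha_f\sigma^2+1)(\alpha_f(\sigma^2+\eta)+1)}$ is in fact sharper than the paper's before the common simplification. The paper's dynamical framework is set up to generalize beyond quadratic $g$, but for the statement as written your static argument is both shorter and avoids the limiting justification $K(0^+)=0$ that the paper leaves implicit.
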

\begin{proof}
We build a continuous path to connect $\pi_\eta^X$ and $\nu$ and then establish an inequality for the first-order and second-order time derivatives of $D_{\mathrm{KL}}(\pi_0 || \pi_t)$.
See Appendix \ref{subappendix:ex4} for the full proof.
\end{proof}

\section{Discussion}
\label{sec:discussion}
In this work, we establish the first non-asymptotic bound of a Gibbs sampler for structured log-concave distributions over bipartite graphs.  For the two-node graph, we further prove its asymptotic property to a lower-dimensional composite distribution and show the convergence rate for special cases. Here are some directions for future investigation. 
\begin{itemize}
    \item How to compute the non-asymptotic rate for \eqref{eq:target} for non-convex potentials? It is possible to generalize to the LSI assumption with the same techniques in this work. However, this requires a tight LSI constant as discussed in Remark \ref{remark:LSIAssump}.
    \item How to analyze the non-asymptotic bound of Gibbs sampling on general graphs? The analysis for the bipartite graphs is a natural generalization of the two-node case due to \eqref{eq:onestepingraph}. Whether the same techniques can be adopted for general graphs is not clear.
    \item How to find the convergence rate of general distributions in the form of \eqref{eq:target_twonodes} as $\eta$ approaches 0? Solving this question will lead to the non-asymptotical bounds for much more general distributions.
\end{itemize} 

\section{Acknowledgments}
We extend our gratitude to the anonymous reviewers for their
invaluable feedback that enhanced this manuscript. Financial support from NSF under grants 1942523, 2008513, and 2206576 is greatly acknowledged.

\newpage
\bibliography{reference}

\begin{thebibliography}{47}
\providecommand{\natexlab}[1]{#1}
\providecommand{\url}[1]{\texttt{#1}}
\expandafter\ifx\csname urlstyle\endcsname\relax
  \providecommand{\doi}[1]{doi: #1}\else
  \providecommand{\doi}{doi: \begingroup \urlstyle{rm}\Url}\fi

\bibitem[Altschuler and Chewi(2023)]{altschuler2023faster}
J.~M. Altschuler and S.~Chewi.
\newblock Faster high-accuracy log-concave sampling via algorithmic warm
  starts.
\newblock \emph{arXiv preprint arXiv:2302.10249}, 2023.

\bibitem[Ambrosio et~al.(2005)Ambrosio, Gigli, and
  Savar{\'e}]{ambrosio2005gradient}
L.~Ambrosio, N.~Gigli, and G.~Savar{\'e}.
\newblock \emph{{Gradient flows: in metric spaces and in the space of
  probability measures}}.
\newblock Springer Science \& Business Media, 2005.

\bibitem[Anderson(1982)]{anderson1982reverse}
B.~D. Anderson.
\newblock Reverse-time diffusion equation models.
\newblock \emph{Stochastic Processes and their Applications}, 12\penalty0
  (3):\penalty0 313--326, 1982.

\bibitem[Bear(1979)]{bear1979approximate}
H.~Bear.
\newblock {Approximate identities and pointwise convergence}.
\newblock \emph{Pacific Journal of Mathematics}, 81\penalty0 (1):\penalty0
  17--27, 1979.

\bibitem[Chen et~al.(2016)Chen, Georgiou, and Pavon]{chen2016relation}
Y.~Chen, T.~T. Georgiou, and M.~Pavon.
\newblock {On the relation between optimal transport and Schr{\"o}dinger
  bridges: A stochastic control viewpoint}.
\newblock \emph{Journal of Optimization Theory and Applications}, 169\penalty0
  (2):\penalty0 671--691, 2016.

\bibitem[Chen et~al.(2021)Chen, Georgiou, and Pavon]{chen2021stochastic}
Y.~Chen, T.~T. Georgiou, and M.~Pavon.
\newblock {Stochastic control liaisons: Richard Sinkhorn meets Gaspard Monge on
  a Schrodinger bridge}.
\newblock \emph{SIAM Review}, 63\penalty0 (2):\penalty0 249--313, 2021.

\bibitem[Chen et~al.(2022)Chen, Chewi, Salim, and Wibisono]{CheCheSalWib22}
Y.~Chen, S.~Chewi, A.~Salim, and A.~Wibisono.
\newblock {Improved analysis for a proximal algorithm for sampling}.
\newblock In \emph{Conference on Learning Theory}, pages 2984--3014. PMLR,
  2022.

\bibitem[Cheng et~al.(2018)Cheng, Chatterji, Bartlett, and
  Jordan]{cheng2018underdamped}
X.~Cheng, N.~S. Chatterji, P.~L. Bartlett, and M.~I. Jordan.
\newblock {Underdamped Langevin MCMC: A non-asymptotic analysis}.
\newblock In \emph{Conference on learning theory}, pages 300--323. PMLR, 2018.

\bibitem[Chewi et~al.(2021)Chewi, Gerber, Lu, Gouic, and
  Rigollet]{chewi2021query}
S.~Chewi, P.~Gerber, C.~Lu, T.~L. Gouic, and P.~Rigollet.
\newblock {The query complexity of sampling from strongly log-concave
  distributions in one dimension}.
\newblock \emph{arXiv preprint arXiv:2105.14163}, 2021.

\bibitem[Chewi et~al.(2022)Chewi, Gerber, Lu, Le~Gouic, and
  Rigollet]{chewi2022query}
S.~Chewi, P.~R. Gerber, C.~Lu, T.~Le~Gouic, and P.~Rigollet.
\newblock {The query complexity of sampling from strongly log-concave
  distributions in one dimension}.
\newblock In \emph{Conference on Learning Theory}, pages 2041--2059. PMLR,
  2022.

\bibitem[Chewi et~al.(2023{\natexlab{a}})Chewi, Gerber, Lee, and
  Lu]{chewi2023fisher}
S.~Chewi, P.~Gerber, H.~Lee, and C.~Lu.
\newblock Fisher information lower bounds for sampling.
\newblock In \emph{International Conference on Algorithmic Learning Theory},
  pages 375--410. PMLR, 2023{\natexlab{a}}.

\bibitem[Chewi et~al.(2023{\natexlab{b}})Chewi, Pont, Li, Lu, and
  Narayanan]{chewi2023query}
S.~Chewi, J.~d.~D. Pont, J.~Li, C.~Lu, and S.~Narayanan.
\newblock {Query lower bounds for log-concave sampling}.
\newblock \emph{arXiv preprint arXiv:2304.02599}, 2023{\natexlab{b}}.

\bibitem[Cover(1999)]{cover1999elements}
T.~M. Cover.
\newblock \emph{{Elements of information theory}}.
\newblock John Wiley \& Sons, 1999.

\bibitem[Dalalyan(2017)]{dalalyan2017theoretical}
A.~S. Dalalyan.
\newblock {Theoretical guarantees for approximate sampling from smooth and
  log-concave densities}.
\newblock \emph{Journal of the Royal Statistical Society: Series B (Statistical
  Methodology)}, 79\penalty0 (3):\penalty0 651--676, 2017.

\bibitem[Dalalyan and Riou-Durand(2018)]{dalalyan2018sampling}
A.~S. Dalalyan and L.~Riou-Durand.
\newblock {On sampling from a log-concave density using kinetic Langevin
  diffusions}.
\newblock \emph{arXiv preprint arXiv:1807.09382}, 2018.

\bibitem[Daphne et~al.(2009)Daphne, Nir, and Francis]{daphne2009probabilistic}
K.~Daphne, F.~Nir, and B.~Francis.
\newblock {Probabilistic graphical models: principles and techniques}.
\newblock \emph{Adaptive Computation and Machine Learning, MIT Press,
  Cambridge, MA}, 2009.

\bibitem[De~Sa et~al.(2018)De~Sa, Chen, and Wong]{de2018minibatch}
C.~De~Sa, V.~Chen, and W.~Wong.
\newblock {Minibatch Gibbs sampling on large graphical models}.
\newblock In \emph{International Conference on Machine Learning}, pages
  1173--1181, 2018.

\bibitem[De~Sa et~al.(2015)De~Sa, Zhang, Olukotun, and R{\'e}]{de2015rapidly}
C.~M. De~Sa, C.~Zhang, K.~Olukotun, and C.~R{\'e}.
\newblock {Rapidly mixing Gibbs sampling for a class of factor graphs using
  hierarchy width}.
\newblock \emph{Advances in neural information processing systems}, 28, 2015.

\bibitem[Devroye et~al.(2018)Devroye, Mehrabian, and Reddad]{devroye2018total}
L.~Devroye, A.~Mehrabian, and T.~Reddad.
\newblock {The total variation distance between high-dimensional Gaussians}.
\newblock \emph{arXiv preprint arXiv:1810.08693}, 2018.

\bibitem[Durmus and Moulines(2016)]{durmus2016high}
A.~Durmus and {\'E}.~Moulines.
\newblock {High-dimensional Bayesian inference via the Unadjusted Langevin
  Algorithm}.
\newblock \emph{arXiv preprint arXiv:1605.01559}, 2016.

\bibitem[Dyer et~al.(1991)Dyer, Frieze, and Kannan]{dyer1991random}
M.~Dyer, A.~Frieze, and R.~Kannan.
\newblock {A random polynomial-time algorithm for approximating the volume of
  convex bodies}.
\newblock \emph{Journal of the ACM (JACM)}, 38\penalty0 (1):\penalty0 1--17,
  1991.

\bibitem[Fan et~al.(2023)Fan, Yuan, and Chen]{fan2023improved}
J.~Fan, B.~Yuan, and Y.~Chen.
\newblock Improved dimension dependence of a proximal algorithm for sampling.
\newblock \emph{arXiv preprint arXiv:2302.10081}, 2023.

\bibitem[Gopi et~al.(2022)Gopi, Lee, and Liu]{gopi2022private}
S.~Gopi, Y.~T. Lee, and D.~Liu.
\newblock {Private convex optimization via exponential mechanism}.
\newblock In \emph{Conference on Learning Theory}, pages 1948--1989. PMLR,
  2022.

\bibitem[Heil(2019)]{Hei19}
C.~Heil.
\newblock \emph{{Introduction to real analysis}}, volume 280.
\newblock Springer, 2019.

\bibitem[Hrycej(1990)]{hrycej1990gibbs}
T.~Hrycej.
\newblock {Gibbs sampling in Bayesian networks}.
\newblock \emph{Artificial Intelligence}, 46\penalty0 (3):\penalty0 351--363,
  1990.

\bibitem[Jamison(1974)]{jamison1974reciprocal}
B.~Jamison.
\newblock Reciprocal processes.
\newblock \emph{Zeitschrift f{\"u}r Wahrscheinlichkeitstheorie und Verwandte
  Gebiete}, 30\penalty0 (1):\penalty0 65--86, 1974.

\bibitem[Lee et~al.(2021)Lee, Shen, and Tian]{lee2021structured}
Y.~T. Lee, R.~Shen, and K.~Tian.
\newblock {Structured logconcave sampling with a restricted Gaussian oracle}.
\newblock In \emph{Conference on Learning Theory}, pages 2993--3050. PMLR,
  2021.

\bibitem[L{\'e}onard(2014)]{leonard2014survey}
C.~L{\'e}onard.
\newblock A survey of the schrodinger problem and some of its connections with
  optimal transport.
\newblock \emph{DYNAMICAL SYSTEMS}, 34\penalty0 (4):\penalty0 1533--1574, 2014.

\bibitem[Levin and Peres(2017)]{levin2017markov}
D.~Levin and Y.~Peres.
\newblock {Markov chains and mixing times, volume 107 (American Mathematical
  Soc.)}.
\newblock 2017.

\bibitem[Li et~al.(2021)Li, Zha, and Tao]{li2021sqrt}
R.~Li, H.~Zha, and M.~Tao.
\newblock {Sqrt (d) dimension dependence of langevin monte carlo}.
\newblock \emph{arXiv preprint arXiv:2109.03839}, 2021.

\bibitem[Pavon et~al.(2021)Pavon, Trigila, and Tabak]{pavon2021data}
M.~Pavon, G.~Trigila, and E.~G. Tabak.
\newblock {The Data-Driven Schr{\"o}dinger Bridge}.
\newblock \emph{Communications on Pure and Applied Mathematics}, 74\penalty0
  (7):\penalty0 1545--1573, 2021.

\bibitem[Rendell et~al.(2020)Rendell, Johansen, Lee, and
  Whiteley]{rendell2020global}
L.~J. Rendell, A.~M. Johansen, A.~Lee, and N.~Whiteley.
\newblock {Global consensus Monte Carlo}.
\newblock \emph{Journal of Computational and Graphical Statistics}, 30\penalty0
  (2):\penalty0 249--259, 2020.

\bibitem[Roberts and Sahu(1997)]{roberts1997updating}
G.~O. Roberts and S.~K. Sahu.
\newblock Updating schemes, correlation structure, blocking and
  parameterization for the gibbs sampler.
\newblock \emph{Journal of the Royal Statistical Society: Series B (Statistical
  Methodology)}, 59\penalty0 (2):\penalty0 291--317, 1997.

\bibitem[Rosenthal(1995)]{Ros95}
J.~S. Rosenthal.
\newblock {Minorization conditions and convergence rates for Markov Chain Monte
  Carlo}.
\newblock \emph{Journal of the American Statistical Association}, 90\penalty0
  (430):\penalty0 558--566, 1995.

\bibitem[Saumard and Wellner(2014)]{saumard2014log}
A.~Saumard and J.~A. Wellner.
\newblock {Log-concavity and strong log-concavity: a review}.
\newblock \emph{Statistics surveys}, 8:\penalty0 45, 2014.

\bibitem[Scheff{\'e}(1947)]{scheffe1947useful}
H.~Scheff{\'e}.
\newblock {A useful convergence theorem for probability distributions}.
\newblock \emph{The Annals of Mathematical Statistics}, 18\penalty0
  (3):\penalty0 434--438, 1947.

\bibitem[Tzen and Raginsky(2019)]{tzen2019theoretical}
B.~Tzen and M.~Raginsky.
\newblock Theoretical guarantees for sampling and inference in generative
  models with latent diffusions.
\newblock In \emph{Conference on Learning Theory}, pages 3084--3114. PMLR,
  2019.

\bibitem[Vargas(2021)]{vargas2021machine}
F.~Vargas.
\newblock {Machine-learning approaches for the empirical Schr{\"o}dinger bridge
  problem}.
\newblock Technical report, University of Cambridge, Computer Laboratory, 2021.

\bibitem[Vempala and Wibisono(2019)]{vempala2019rapid}
S.~Vempala and A.~Wibisono.
\newblock {Rapid convergence of the unadjusted Langevin algorithm: Isoperimetry
  suffices}.
\newblock \emph{Advances in neural information processing systems}, 32, 2019.

\bibitem[Venugopal and Gogate(2012)]{venugopal2012lifting}
D.~Venugopal and V.~Gogate.
\newblock {On lifting the Gibbs sampling algorithm}.
\newblock \emph{Advances in Neural Information Processing Systems}, 25, 2012.

\bibitem[Villani(2021)]{villani2021topics}
C.~Villani.
\newblock \emph{{Topics in optimal transportation}}, volume~58.
\newblock American Mathematical Soc., 2021.

\bibitem[Vono et~al.(2019)Vono, Dobigeon, and Chainais]{vono2019split}
M.~Vono, N.~Dobigeon, and P.~Chainais.
\newblock {Split-and-augmented Gibbs sampler—Application to large-scale
  inference problems}.
\newblock \emph{IEEE Transactions on Signal Processing}, 67\penalty0
  (6):\penalty0 1648--1661, 2019.

\bibitem[Vono et~al.(2020)Vono, Dobigeon, and Chainais]{vono2020asymptotically}
M.~Vono, N.~Dobigeon, and P.~Chainais.
\newblock {Asymptotically exact data augmentation: Models, properties, and
  algorithms}.
\newblock \emph{Journal of Computational and Graphical Statistics}, 30\penalty0
  (2):\penalty0 335--348, 2020.

\bibitem[Vono et~al.(2022)Vono, Paulin, and Doucet]{vono2022efficient}
M.~Vono, D.~Paulin, and A.~Doucet.
\newblock {Efficient MCMC Sampling with Dimension-Free Convergence Rate using
  ADMM-type Splitting}.
\newblock \emph{Journal of Machine Learning Research}, 23:\penalty0 1--69,
  2022.

\bibitem[Zhang et~al.(2023)Zhang, Chewi, Li, Balasubramanian, and
  Erdogdu]{zhang2023improved}
M.~Zhang, S.~Chewi, M.~B. Li, K.~Balasubramanian, and M.~A. Erdogdu.
\newblock {Improved discretization analysis for underdamped Langevin Monte
  Carlo}.
\newblock \emph{arXiv preprint arXiv:2302.08049}, 2023.

\bibitem[Zhang and Chen(2022)]{zhangpath2022}
Q.~Zhang and Y.~Chen.
\newblock Path integral sampler: A stochastic control approach for sampling.
\newblock In \emph{International Conference on Learning Representations}, 2022.

\bibitem[Zhang and De~Sa(2019)]{zhang2019poisson}
R.~Zhang and C.~M. De~Sa.
\newblock {Poisson-minibatching for Gibbs sampling with convergence rate
  guarantees}.
\newblock \emph{Advances in Neural Information Processing Systems}, 32, 2019.

\end{thebibliography}

\appendix
\newpage

\section{Supplementary lemmas}
\label{section:appendix}

\subsection{Proof of Lemma \ref{lemma:basic}}
\label{subsection:appendix_A1}
\begin{proof}
We start with the time derivative of $D_{\mathrm{KL}}(\mu_t || \pi_t)$. With  the Fokker–Planck equation, we have
    \begin{align}\label{eq:dataprocessing1}
    \begin{split}
        \frac{\partial D_{\mathrm{KL}}(\mu_t || \pi_t) }{\partial t} 
        &=\int \partial_{t}\mu_t\log\frac{\mu_t}{\pi_t} + \partial_{t}\left(\frac{\mu_t}{\pi_t}\right)\pi_t \\
        &=\int \partial_{t}\mu_t\log\frac{\mu_t}{\pi_t} + \int \partial_{t} \mu_t - \int \partial_{t}\pi_t\frac{\mu_t}{\pi_t} \\
        &= \int \left[-\nabla \cdot ( b_t\mu_t) + \sum_i\frac{\sigma_{t,i}^2}{2}\partial^2_i \mu_t\right]\log\frac{\mu_t}{\pi_t}\\
        &- \int \left[-\nabla \cdot ( b_t\pi_t) + \sum_i\frac{\sigma_{t,i}^2}{2}\partial^2_i \pi_t\right]\frac{\mu_t}{\pi_t},
    \end{split}
    \end{align}
where the symbol $\partial^2_i$ is the second-order derivative w.r.t. the $i$-th coordinate.
By integration by parts, we obtain
    \begin{align}\label{eq:dataprocessing2}
    \begin{split}
        \int \nabla \cdot ( b_t\mu_t)\log\frac{\mu_t}{\pi_t}  &= -\int  \mu_tb_t\cdot \nabla \log\frac{\mu_t}{\pi_t}\\
        &= -\int  \pi_tb_t\cdot \nabla \frac{\mu_t}{\pi_t}\\
        &= \int  \nabla \cdot( b_t\pi_t) \frac{\mu_t}{\pi_t}.
    \end{split}
    \end{align}
Combining \eqref{eq:dataprocessing1} and \eqref{eq:dataprocessing2} yields 
    \begin{align}\label{eq:dataprocessing3}
    \begin{split}
        \frac{\partial D_{\mathrm{KL}}(\mu_t || \pi_t) }{\partial t} 
        &= \sum_i\frac{\sigma_{t,i}^2}{2}\int \partial^2_i \mu_t\log\frac{\mu_t}{\pi_t} -\partial^2_i \pi_t \frac{\mu_t}{\pi_t}\\
        &= -\sum_i\frac{\sigma_{t,i}^2}{2}\int \partial_i \mu_t \left(\partial_i\log\frac{\mu_t}{\pi_t}\right) -\partial_i \pi_t \left(\partial_i\frac{\mu_t}{\pi_t}\right)\\
        &= -\sum_i\frac{\sigma_{t,i}^2}{2}\int \left\Vert\partial_i\log\frac{\mu_t}{\pi_t}\right\Vert_2^2 \mu_t\\
        &\le-\frac{\lambda_t^2}{2}I(\mu_t||\pi_t)
    \end{split}
    \end{align}
where $\lambda_t^2 \leq \min_i\sigma_{t,i}^2$.
\end{proof}

\subsection{Proof of Lemma \ref{lemma:diffusion}}
\label{subsection:appendix_A2}
\begin{proof}
We show $P(z(t)) \propto \phi_t (z(t)) \hat \phi_t (z(t))$ first. By definition of Brownian bridge, the conditional distribution of $z(t)$ on $z(0), z(1)$ is
    \[
        P(z(t) | z(0), z(1)) \propto \exp(-\frac{1}{2\eta t(1-t)}\|z(t)-(1-t)z(0)-tz(1)\|^2).
    \]
It follows that
    \begin{eqnarray*}
        P(z(t)) &\propto& \int \exp(-f(x)-g(y)-\frac{1}{2\eta}\|x-y\|^2-\frac{1}{2\eta t(1-t)}\|z(t)-(1-t)x-ty\|^2) \rd x \rd y\\
        &\propto& \phi_t (z(t)) \hat \phi_t (z(t)).
    \end{eqnarray*}
Now consider the forward process \eqref{eq:forward}. It can be shown \citep{tzen2019theoretical,zhangpath2022} that the conditional distribution of $Z_1$ on $Z_0$ is 
    \[
        P(Z_1 | Z_0) \propto \exp(-g(Z_1) - \frac{1}{2\eta}\|Z_1-Z_0\|^2),
    \]
and \eqref{eq:forward} belongs to the same reciprocal class \citep{jamison1974reciprocal} as the Brownian bridge, that is, $P(Z_\cdot|Z_0, Z_1)$ coincides with that of the Brownian bridge $P(z(\cdot)| z(0), z(1))$. The drift $\phi_t$ is known to be the Follmer drift \citep{jamison1974reciprocal}. Taking into account the initial condition $Z_0\sim P(z(0))$, we conclude that the diffusion process \eqref{eq:forward} is a representation of the trajectory distribution we want. 

By Doob's $h$-transform \citep{anderson1982reverse,chen2016relation}, \eqref{eq:forward} is has an equivalent backward representation
    \[
        \rd Z_t = [\eta \nabla \log \phi_t (Z_t) - \eta \nabla \log P(Z_t)] \rd t +\sqrt{\eta} \rd W_t.
    \]
Invoking the fact that $P(z(t)) \propto \phi_t (z(t)) \hat \phi_t (z(t))$, we arrive at the backward representation \eqref{eq:backward}.
\end{proof}

\subsection{KL divergence contraction }

\begin{lemma}\label{lemma:KL}
Let $\pi^{XY}$ and $\mu^{XY}$ be any two joint distributions of $(x,y)$. Then we have
\begin{equation}\label{eq:marginal}
    D_{\mathrm{KL}}(\mu^{XY}||\pi^{XY}) = D_{\mathrm{KL}}(\mu^X||\pi^{X}) + \bbE_{\mu^X}(D_{\mathrm{KL}}(\mu^{Y|X}||\pi^{Y|X}))
\end{equation}
Note that if we further assume $\mu^{Y|X}=\pi^{Y|X}$, then 
\begin{equation}
    D_{\mathrm{KL}}(\mu^{XY}||\pi^{XY}) = D_{\mathrm{KL}}(\mu^X||\pi^{X})
\end{equation}
A more general form is $A \in \bbR^{m \times n}$ and $X \in \bbR^n$. For any two distributions $\mu^{X}$ and $\pi^{X}$, and their corresponding pushforward measures $\mu^{AX}$ and $\pi^{AX}$, we have 
\begin{equation}\label{eq:KLA}
    D_{\mathrm{KL}}(\mu^{AX}||\pi^{AX}) \leq D_{\mathrm{KL}}(\mu^X||\pi^{X})
\end{equation}
\end{lemma}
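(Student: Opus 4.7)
The plan is to verify the three claims in sequence, all of which are standard consequences of the disintegration identity for \ac{KL} divergence. The content is routine once the conditional factorization is written out, so I do not expect any serious obstacle; the main care is in the third part, where $A$ may be rank-deficient and ordinary densities can fail to exist.

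For the chain rule \eqref{eq:marginal}, I would factor $\mu^{XY}(x,y) = \mu^X(x)\,\mu^{Y|X}(y|x)$ and analogously for $\pi^{XY}$, then split the log-ratio as $\log \frac{\mu^{XY}}{\pi^{XY}} = \log \frac{\mu^X}{\pi^X} + \log \frac{\mu^{Y|X}}{\pi^{Y|X}}$. Integrating against $\mu^{XY}$, the first term contributes $D_{\mathrm{KL}}(\mu^X\|\pi^X)$ after integrating $y$ out, and the second contributes $\mathbb{E}_{\mu^X}[D_{\mathrm{KL}}(\mu^{Y|X}\|\pi^{Y|X})]$ by Fubini. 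The second identity is then immediate: under $\mu^{Y|X}=\pi^{Y|X}$, the inner KL term vanishes $\mu^X$-a.s., leaving only the marginal KL.

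For the data processing inequality \eqref{eq:KLA}, I would reduce it to the chain rule applied to the joint law of $(AX, X)$. Since $AX$ is a deterministic function of $X$, the joint law is supported on the graph $\{(Ax,x)\}$, and the push-forward from $X$ to $(AX,X)$ is a bijection of measures; therefore
\begin{equation}\nonumber
    D_{\mathrm{KL}}\bigl(\mu^{AX,X} \,\|\, \pi^{AX,X}\bigr) = D_{\mathrm{KL}}(\mu^X \| \pi^X).
\end{equation}
On the other hand, applying the chain rule in the order ``$AX$ first, then $X$ given $AX$'' gives
\begin{equation}\nonumber
    D_{\mathrm{KL}}\bigl(\mu^{AX,X} \,\|\, \pi^{AX,X}\bigr) = D_{\mathrm{KL}}(\mu^{AX} \| \pi^{AX}) + \mathbb{E}_{\mu^{AX}}\bigl[D_{\mathrm{KL}}(\mu^{X|AX} \| \pi^{X|AX})\bigr].
\end{equation}
Dropping the non-negative conditional KL term and equating the two expressions yields the desired inequality.

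The one subtlety I would be careful with is that when $A$ has non-trivial kernel, the joint law $(AX,X)$ is singular with respect to Lebesgue measure, so I would formulate the chain rule via Radon–Nikodym derivatives or, equivalently, via the sub-$\sigma$-algebra viewpoint in which $\sigma(AX)\subseteq \sigma(X)$ and KL divergence is monotone under coarsening. This sidesteps any density issue and makes the argument valid for arbitrary measurable (not merely linear) maps, giving \eqref{eq:KLA} as a special case.
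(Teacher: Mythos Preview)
Your argument is correct. For the chain rule and its corollary you proceed exactly as the paper does: factor the joint density, split the log-ratio, and integrate.

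For the data processing inequality \eqref{eq:KLA} your route genuinely differs from the paper's. The paper works concretely with the linear structure: it first assumes $A$ has full row rank, completes $A$ to an invertible square matrix $\hat A$, uses the change-of-variables formula to show $D_{\mathrm{KL}}(\mu^{\hat A X}\|\pi^{\hat A X}) = D_{\mathrm{KL}}(\mu^{X}\|\pi^{X})$, and then invokes the chain rule to see that passing to the first $m$ coordinates can only decrease the KL; the rank-deficient case is handled separately by extracting a maximal set of independent rows. Your argument instead augments to the pair $(AX,X)$, uses that a deterministic augmentation preserves KL, and then applies the chain rule in the order ``$AX$ first'' to drop a nonnegative conditional term. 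Your version is shorter, avoids the case split on $\mathrm{rank}(A)$, and in fact yields the inequality for an arbitrary measurable map in place of $A$; the paper's version has the minor advantage of staying entirely within density computations when $A$ is full row rank, but it relies on the linearity in an essential way.
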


\begin{proof}
\begin{align*}
D_{\mathrm{KL}}(\mu^{XY}||\pi^{XY})&= \int \log \frac{\mu^{XY}}{\pi^{XY}}\mu^{XY}\\
&= \int \log \left(\frac{\mu^X}{\pi^{X}}\frac{\mu^{Y|X}}{\pi^{Y|X}}\right)\mu^{X}\mu^{Y|X}  \\
&=\int \log \frac{\mu^X}{\pi^{X}}\mu^{X}\mu^{Y|X} +  \int \log \frac{\mu^{Y|X}}{\pi^{Y|X}}\mu^{X}\mu^{Y|X}\\
&=\int \log \frac{\mu^X}{\pi^{X}}\mu^{X} + \int D_{\mathrm{KL}}(\mu^{Y|X}||\pi^{Y|X})\mu^{X} \\
&=D_{\mathrm{KL}}(\mu^X||\pi^{X}) + \mathbb{E}_{\mu^X}(D_{\mathrm{KL}}(\mu^{Y|X}||\pi^{Y|X})).
\end{align*}
We note that the decomposition above still holds if we treat $\frac{\mu^{XY}}{\pi^{XY}}$ as the  Radon-Nikodym derivative instead of the ratio of density functions. See Lemma 1 in \citet{vargas2021machine} for details.

Next we prove \eqref{eq:KLA}. For the more general case, initially, we assume $A$ is full row rank, which implies $m\leq n$. Then let $\hat{A}$ be an $n \times n$ invertible matrix whose first $m$ rows exactly comprise $A$. It follows that 
\begin{align*}
    D_{\mathrm{KL}}(\mu^{\hat{A}X}||\pi^{\hat{A}X}) 
    &= \int \log \frac{\mu^{\hat{A}X}}{\pi^{\hat{A}X}}\mu^{\hat{A}X}\\
    &= \int \log \frac{\mu^{X}(\hat{A}^{-1}x)}{\pi^{X}(\hat{A}^{-1}x)}|\det(\hat{A}^{-1})|\mu^{X}(\hat{A}^{-1}x)\\
    &= \int \log \frac{\mu^{X}(y)}{\pi^{X}(y)}\mu^{X}(y)\\
    &=  D_{\mathrm{KL}}(\mu^{X}||\pi^{X}).
\end{align*}
According to Equation \eqref{eq:marginal}, since $AX$ is just the first $m$ components of $\hat{A}X$, we have 
\begin{equation*}
    D_{\mathrm{KL}}(\mu^{AX}||\pi^{AX}) 
    \leq D_{\mathrm{KL}}(\mu^{\hat{A}X}||\pi^{\hat{A}X})  
    =  D_{\mathrm{KL}}(\mu^{X}||\pi^{X}).
\end{equation*}

Next consider the case where $A$ is not full row rank. One can show, even in this case, $AX$ is still a well-defined random vector, but $\mu^{AX}$ and $\pi^{AX}$ are not absolutely continuous with respect to the Lebesgue measure. Define the rank of $A$ by $r$. One can always pick up $r$ independent rows from $A$ which we denote by $\bar{A}$. Notice that the probability measures over $\bar{A}X$ is essentially the same as the measure over $AX$, as all other coordinates are uniquely determined by $\bar{A}X$. Hence, we have
\begin{equation*}
    D_{\mathrm{KL}}(\mu^{AX}||\pi^{AX}) = D_{\mathrm{KL}}(\mu^{\bar{A}X}||\pi^{\bar{A}X}).
\end{equation*}
Because $\bar{A}$ is full row rank, according to the previous case, we eventually have
\begin{equation*}
    D_{\mathrm{KL}}(\mu^{AX}||\pi^{AX}) \leq D_{\mathrm{KL}}(\mu^{X}||\pi^{X}).
\end{equation*}

\end{proof}

\section{Strong-concavity constant}

\begin{proposition}\label{prop:prop1}
With Assumption \ref{assum:assum2}, define
\begin{align}
    \begin{split}
        \pi_t(\mathbf{x},\mathbf{y},\mathbf{z})  &\propto \exp\left[-\sum_{i=1}^n f_i(x_i)-\sum_{j=1}^m g_j(y_j)-\sum_{i=1}^n\sum_{j=1}^m\frac{\sigma_{ij}}{2\eta} \Vert x_i-y_j \Vert^2_2\right] \\
        & \quad\exp\left[-\sum_{j=1}^{m}\frac{\sum_{i=1}^n\sigma_{ij}}{2t(1-t)\eta}\|z_t^j-ty_j-(1-t) \frac{\sum_{i=1}^n\sigma_{ij}x_i}{\sum_{i=1}^n\sigma_{ij}}\|_2^2\right].
    \end{split}
\end{align}
Then its $\mathbf{z}$-marginal distribution, $\pi_t(\mathbf{z}) \propto \int  \pi_t(\mathbf{x},\mathbf{y},\mathbf{z})\rd\mathbf{x} \rd\mathbf{y}$, is strongly log-concave with a coefficient $ c_t = \left[\frac{\eta t(1-t)}{\min_j\sum_{i=1}^n\sigma_{ij}}+\frac{mn(1-t)^2}{\min_i \alpha_{f_i}}+\frac{t^2}{\min_j \alpha_{g_j}}\right]^{-1}$.
\end{proposition}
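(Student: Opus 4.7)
My plan is to treat $\pi_t(\mathbf{z})$ as the marginal of the jointly log-concave density $\pi_t(\mathbf{x}, \mathbf{y}, \mathbf{z})$, then lower bound the Hessian of $-\log \pi_t(\mathbf{z})$ via a Brascamp--Lieb / Schur complement argument. Write $V(\mathbf{x}, \mathbf{y}, \mathbf{z})$ for the joint potential (the negative log of the product of the two exponentials in the statement) and block its Hessian as
\[
\nabla^2 V = \begin{pmatrix} A(\mathbf{x}, \mathbf{y}) & B \\ B^T & C \end{pmatrix},
\]
where $C = \nabla^2_{\mathbf{z}} V = \operatorname{diag}_j\!\bigl(\tau_j/(t(1-t)\eta)\,\ide\bigr)$ with $\tau_j := \sum_{i=1}^n \sigma_{ij}$, and $B$ contains the cross second derivatives (coming entirely from the auxiliary quadratic $\|z^j - t y_j - (1-t)\bar{x}_j\|_2^2$, with $\bar{x}_j := \sum_i \sigma_{ij} x_i/\tau_j$). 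Starting from the identity $\nabla^2_{\mathbf{z}}(-\log \pi_t)(\mathbf{z}) = C - \operatorname{Cov}_{(\mathbf{x},\mathbf{y})|\mathbf{z}}(\nabla_{\mathbf{z}} V)$ and applying the Brascamp--Lieb variance inequality to the linear functional $(\mathbf{x},\mathbf{y}) \mapsto \nabla_{\mathbf{z}} V$, one obtains
\[
\nabla^2_{\mathbf{z}}(-\log \pi_t)(\mathbf{z}) \;\succeq\; C - B^T A_0^{-1} B,
\]
where $A_0 := \operatorname{diag}_i(\alpha_{f_i}\ide) \oplus \operatorname{diag}_j(\alpha_{g_j}\ide)$ is the pointwise lower bound on $A$ obtained by dropping the (PSD) coupling terms $\sigma_{ij}\|x_i - y_j\|_2^2/(2\eta)$ and the PSD contribution of $Q$ to $A$.

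To extract the stated constant $c_t$, I would show $(C - B^T A_0^{-1} B)^{-1} \preceq c_t^{-1} \ide$ via Woodbury,
\[
(C - B^T A_0^{-1} B)^{-1} = C^{-1} + C^{-1} B^T (A_0 - B C^{-1} B^T)^{-1} B C^{-1},
\]
so that the three summands in $c_t^{-1}$ appear as three additive upper bounds. The first comes directly from $C^{-1} \preceq \bigl(\eta t(1-t)/\min_j \tau_j\bigr)\ide$. For the Woodbury correction I would decompose $B$ along its $\mathbf{x}$- and $\mathbf{y}$-blocks, where the $\mathbf{y}$-block is simply block-diagonal with entries $-\tau_j/((1-t)\eta)\ide$: after the $C^{-1}$ cancellations and using $\nabla^2_{\mathbf{y}} V \succeq \operatorname{diag}_j(\alpha_{g_j}\ide)$, this contribution evaluates to at most $t^2/\min_j \alpha_{g_j} \cdot \ide$, matching the third term of $c_t^{-1}$. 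The $\mathbf{x}$-block of $B$ has entries $-\sigma_{ij}/(t\eta)\ide$, and bounding its quadratic form against $A_0^{-1}$ (with $A_{xx} \succeq \operatorname{diag}_i(\alpha_{f_i}\ide)$) using $\sigma_{ij} \le 1$, the normalization $\sum_i \sigma_{ij}/\tau_j = 1$, and Cauchy--Schwarz across the $n \times m$ index pairs produces the middle term $mn(1-t)^2/\min_i \alpha_{f_i}$. Summing the three yields the required inequality $\nabla^2(-\log \pi_t)(\mathbf{z}) \succeq c_t\,\ide$.

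The main obstacle will be the matrix-inequality bookkeeping for the middle term: the averaging matrix with entries $\sigma_{ij}/\tau_j$ is rectangular, couples the $n$ $\mathbf{x}$-vertices with the $m$ $\mathbf{y}$-vertices, and the Woodbury correction involves a quadratic form against the full inverse $(A_0 - B C^{-1} B^T)^{-1}$ that is still a $(n+m)d \times (n+m)d$ block matrix with coupling between $\mathbf{x}$- and $\mathbf{y}$-blocks. Reducing this cleanly to a scalar bound of the form $mn/\min_i \alpha_{f_i}$ requires crude but careful Cauchy--Schwarz inequalities over the two index sets, and it is precisely these crude bounds that introduce the combinatorial $mn$ factor. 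As the paper itself notes after Theorem~\ref{theorem:bipartite}, this is the source of the non-tightness; a direct Hessian computation on the full $\pi_t(\mathbf{x}, \mathbf{y}, \mathbf{z})$ would give a sharper constant but is more delicate, and the $mn$-loose version already suffices for the strong data-processing step feeding Theorem~\ref{theorem:bipartite}.
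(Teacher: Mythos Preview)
Your Brascamp--Lieb setup is sound, but the step where you replace $A$ by $A_0 = \operatorname{diag}_i(\alpha_{f_i}\ide)\oplus\operatorname{diag}_j(\alpha_{g_j}\ide)$ discards too much: you drop not only the coupling terms $\sigma_{ij}\|x_i-y_j\|^2/(2\eta)$ but also the contribution of the auxiliary quadratic $Q$ to the $(\mathbf{x},\mathbf{y})$-Hessian. That second drop is fatal. In the two-node case $n=m=1$, $\sigma_{11}=1$, one computes $C - B^T A_0^{-1}B = \tfrac{1}{t(1-t)\eta} - \tfrac{1}{t^2\eta^2\alpha_f} - \tfrac{1}{(1-t)^2\eta^2\alpha_g}$, which is negative for $t$ near $0$ or $1$ (and e.g.\ equals $-4$ at $t=1/2$, $\eta=\alpha_f=\alpha_g=1$), so it cannot dominate $c_t\ide$. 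Equivalently, the block matrix $\bigl(\begin{smallmatrix}A_0 & B\\ B^T & C\end{smallmatrix}\bigr)$ is indefinite, so your Woodbury identity for $(C-B^TA_0^{-1}B)^{-1}$ involves the inverse of $A_0 - BC^{-1}B^T$, which need not exist as a positive matrix. Your subsequent ``three summands'' computation implicitly replaces $(A_0 - BC^{-1}B^T)^{-1}$ by $A_0^{-1}$, which is the wrong direction of inequality.

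The fix is exactly what the paper does: keep $Q$'s contribution to the $(\mathbf{x},\mathbf{y})$-block, i.e.\ use $\tilde A_0 := A_0 + BC^{-1}B^T$ as your lower bound on $A$ (this is precisely the $(\mathbf{x},\mathbf{y})$-Hessian of the paper's quadratic $G$, and one still has $A\succeq\tilde A_0$ since only the convex pieces $\hat f_i,\hat g_j$ and the PSD couplings were dropped). Then $\bigl(\begin{smallmatrix}\tilde A_0 & B\\ B^T & C\end{smallmatrix}\bigr)=M^T\Lambda M\succ 0$, the Woodbury step is valid, and the correction term becomes $(\tilde A_0 - BC^{-1}B^T)^{-1}=A_0^{-1}$, giving $(C-B^T\tilde A_0^{-1}B)^{-1}=C^{-1}+C^{-1}B^TA_0^{-1}BC^{-1}=\Lambda_1^{-1}+(1-t)^2A^T\Lambda_2^{-1}A+t^2\Lambda_3^{-1}$. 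From here your three-summand bound (including the operator-norm estimate $\|\Lambda_2^{-1/2}A\|_{\mathrm{op}}^2\le mn/\min_i\alpha_{f_i}$ that produces the $mn$ factor) goes through as written and matches the paper's argument.
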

\begin{proof}
    We will begin with proving the joint distribution $\pi_t(\mathbf{x},\mathbf{y},\mathbf{z})$ is strongly concave. As each $f_i$ and each $g_j$ are strongly convex with coefficients $\alpha_{f_j}$ and $\alpha_{g_j}$, respectively, we can rewrite $f_i(x_i) = \hat{f}_i(x_i)+\frac{\alpha_{f_i}}{2}\|x_i\|^2_2$ and $g_j(y_j) = \hat{g}_j(y_j)+\frac{\alpha_{g_j}}{2}\|y_j\|^2_2$ where $\hat{f}_i$ and $\hat{g}_j$ are convex functions.
    Denote $(x_1,\ldots,x_n)$, $(y_1,\ldots,y_m)$, $(z_1,\ldots,z_m)$ by  $\mathbf{x},\mathbf{y},\mathbf{z}$, respectively. Then we can decompose $\pi_t(\mathbf{x},\mathbf{y},\mathbf{z}) $ as $-\log\pi_t(\mathbf{x},\mathbf{y},\mathbf{z})
             =F(\mathbf{x},\mathbf{y},\mathbf{z})+G(\mathbf{x},\mathbf{y},\mathbf{z})$
    where\begin{equation}
        F(\mathbf{x},\mathbf{y},\mathbf{z})=\sum_{i=1}^n \hat{f}_i(x_i) +\sum_{j=1}^m \hat{g}_j(y_j) +\sum_{i=1}^n\sum_{j=1}^m\frac{\sigma_{ij}}{2\eta} \Vert x_i-y_j \Vert^2_2
    \end{equation} 
    and
    \begin{equation}
        G(\mathbf{x},\mathbf{y},\mathbf{z})=\sum_i \frac{\alpha_{f_i}}{2}\|x_i\|^2_2 + \sum_j \frac{\alpha_{g_j}}{2}\|y_j\|^2_2 
 +\sum_{j=1}^{m}\frac{\sum_{i=1}^n\sigma_{ij}}{2t(1-t)\eta}\left\Vert z_t^j-ty_j-(1-t) \frac{\sum_{i=1}^n\sigma_{ij}x_i}{\sum_{i=1}^n\sigma_{ij}}\right\Vert_2^2.
    \end{equation} 
    By definition, $F(\mathbf{x},\mathbf{y},\mathbf{z})$ is convex jointly in $\mathbf{x},\mathbf{y},\mathbf{z}$ while $G(\mathbf{x},\mathbf{y},\mathbf{z})$ is strongly-convex jointly in $\mathbf{x},\mathbf{y},\mathbf{z}$. Hence $\pi_t(\mathbf{x},\mathbf{y},\mathbf{z})$ is strongly log-concave. By theorem 3.8 in \citep{saumard2014log}, the $\mathbf{z}$-marginal distribution is also strongly log-concave. Then we will compute the strongly-convexity constant for $\pi_t$.
    Define  
    \begin{equation}\nonumber
        M = 
        \left(
            \begin{array}{c|c}
            \ide_{m \times m} & B\\ \hline
            \mathbf{0}_{(m+n) \times m} & \ide_{(m+n) \times (m+n)}
            \end{array}
        \right)
    \end{equation}\nonumber
    where
    \begin{equation}\nonumber
      B = 
        \left(
            \begin{array}{c|c}
            -(1-t)A^T & -t\ide_{m \times m}
            \end{array}
        \right).
    \end{equation}
    and
    the entry of $A$ at the $i$-th row and $j$-th column is $\nicefrac{\sigma_{ij}}{\sum_{i=1}^n\sigma_{ij}}$.
    Furthermore, define a diagonal matrix $\Lambda$ as 
    \begin{equation}\nonumber
        \Lambda = 
            \begin{pmatrix}
                \Lambda_1& &\\
                &\Lambda_2 & \\
                & & \Lambda_3
            \end{pmatrix}
    \end{equation}
    with 
    \begin{align}
        \begin{split}
            \Lambda_1 &=\frac{1}{t(1-t)\eta}\mathrm{diag}\left(\sum_{i=1}^n\sigma_{i1},\ldots,\sum_{i=1}^n\sigma_{im}\right), \\
            \Lambda_2 &=\mathrm{diag}(\alpha_{f_1},\ldots,\alpha_{f_n}), \\
            \Lambda_3 &=\mathrm{diag}(\alpha_{g_1},\ldots,\alpha_{g_m}).
        \end{split}
    \end{align}
    Then clearly,
    \begin{equation}\nonumber
         G(\mathbf{x},\mathbf{y},\mathbf{z}) = \frac{1}{2}(\mathbf{z},\mathbf{x},\mathbf{y})^T M^T\Lambda M (\mathbf{z},\mathbf{x},\mathbf{y})^T.
    \end{equation} 
    By Theorem 3.8 in \citep{saumard2014log}, it is sufficient to compute the smallest eigenvalue of the upper left block of the inverse of $M^T\Lambda$.
    The inverse of the blocked matrix can be given by Woodbury matrix identity. Combining the two results, we only need to find $c_t >0$ such that  
    \begin{equation}\nonumber
        c_t \ide \preceq  (\Lambda_1^{-1}+B\Lambda_2^{-1}B^T)^{-1}=(\Lambda_1^{-1}+(1-t)^2A^T\Lambda_{2}^{-1}A+t^2\Lambda_{3}^{-1})^{-1}.
    \end{equation}
 Notice that the largest eigenvalue of $A^T\Lambda_{2}^{-1}A$ is the squared operator norm of $\sqrt{\Lambda_{2}^{-1}}A$, which satisfies
 \begin{equation}\nonumber
     \left\Vert \sqrt{\Lambda_{2}^{-1}}A\right\Vert_{\mathrm{op}}^2\le mn\left\Vert \sqrt{\Lambda_{2}^{-1}}A\right\Vert_{\mathrm{max}}^2 \le \frac{mn}{\min_i \alpha_{f_i}}.
 \end{equation}
 Therefore, 
  \begin{align}
    \begin{split}
         c_t = \left[\frac{\eta t(1-t)}{\min_j\sum_{i=1}^n\sigma_{ij}}+\frac{mn(1-t)^2}{\min_i \alpha_{f_i}}+\frac{t^2}{\min_j \alpha_{g_j}}\right]^{-1}.
    \end{split}
    \end{align}

\end{proof}

\section{Convergence rate for the two-node case under the convexity assumption}\label{appendix:convexity}
To begin with, we introduce the 2-Wasserstein distance. The 2-Wasserstein distance for two distributions $\mu$ and $\nu$ with finite second-order moments is defined as follows,
\begin{equation}\nonumber
    W_2^2(\mu,\nu) =\inf_{\gamma \in \Gamma(\mu,\nu) } \int_{(x,y)\sim \gamma} \|x-y\|_2^2 \rd \gamma(x,y)
\end{equation} where $\Gamma(\mu,\nu)$ is the set of all couplings for $\mu$ and $\nu$.
\begin{proposition}[Data processing inequality with log-concavity]\label{thm:dataprocessing2} 
    Under the condition of Lemma \ref{lemma:basic}, if we further assume $\pi_t$ is log-concave and 
    % \chen{only need $W_2^2(\mu_0,\pi_0) \le C$} 
    $W_2^2(\mu_t,\pi_t) \le C, \: \forall t\in [0,T]$, then 
    \begin{equation}\nonumber
        \frac{1}{D_{\mathrm{KL}}(\mu_T || \pi_T)}  \ge \frac{1}{D_{\mathrm{KL}}(\mu_0 || \pi_0)}+\frac{\int_0^T \lambda_t^2dt}{2C}.
    \end{equation}
\end{proposition}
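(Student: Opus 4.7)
The plan is to combine the dissipation bound from Lemma \ref{lemma:basic} with a functional inequality relating KL divergence, Fisher information, and Wasserstein distance that holds under log-concavity. Specifically, for log-concave $\pi_t$ the HWI inequality (with curvature parameter $K=0$) gives
\begin{equation*}
D_{\mathrm{KL}}(\mu_t \| \pi_t) \;\le\; W_2(\mu_t,\pi_t)\,\sqrt{I(\mu_t \| \pi_t)},
\end{equation*}
so after squaring and rearranging, $I(\mu_t \| \pi_t) \ge D_{\mathrm{KL}}(\mu_t \| \pi_t)^2 / W_2^2(\mu_t,\pi_t)$. This is the log-concave analogue of the LSI used in Theorem \ref{theorem:dataprocessing}, and it is the workhorse of the proof.

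First I would invoke Lemma \ref{lemma:basic} to obtain $\partial_t D_{\mathrm{KL}}(\mu_t \| \pi_t) \le -\tfrac{\lambda_t^2}{2} I(\mu_t \| \pi_t)$. Substituting the HWI lower bound on $I$ and using the hypothesis $W_2^2(\mu_t,\pi_t) \le C$ yields
\begin{equation*}
\frac{\partial}{\partial t} D_{\mathrm{KL}}(\mu_t \| \pi_t) \;\le\; -\frac{\lambda_t^2}{2\,C}\, D_{\mathrm{KL}}(\mu_t \| \pi_t)^2.
\end{equation*}
Setting $F_t := D_{\mathrm{KL}}(\mu_t \| \pi_t)$, this is a scalar Bernoulli-type inequality $F_t' \le -\tfrac{\lambda_t^2}{2C} F_t^2$.

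Next I would solve the differential inequality by noting that $\tfrac{d}{dt}(1/F_t) = -F_t'/F_t^2 \ge \lambda_t^2/(2C)$, and integrating from $0$ to $T$ to conclude
\begin{equation*}
\frac{1}{D_{\mathrm{KL}}(\mu_T \| \pi_T)} \;\ge\; \frac{1}{D_{\mathrm{KL}}(\mu_0 \| \pi_0)} + \frac{\int_0^T \lambda_t^2\, \mathrm{d}t}{2C}.
\end{equation*}

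The main obstacle is establishing (or carefully citing) the HWI-type inequality $D_{\mathrm{KL}}(\mu \| \pi)^2 \le W_2^2(\mu,\pi)\,I(\mu \| \pi)$ for log-concave $\pi_t$ that may be time-varying and not necessarily smooth. In the strongly log-concave case the proof in Theorem \ref{theorem:dataprocessing} proceeds via LSI, which no longer applies here; one instead relies on the $K=0$ version of the HWI inequality (see Section A.3 of \citet{CheCheSalWib22}). A secondary minor issue is ensuring the Wasserstein assumption $W_2^2(\mu_t,\pi_t) \le C$ is preserved along the flow so that the differential inequality can be integrated uniformly. Once this inequality is in hand, the rest of the argument is a one-line ODE computation.
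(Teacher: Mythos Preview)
Your proposal is correct and follows essentially the same route as the paper: the paper derives the inequality $D_{\mathrm{KL}}(\mu_t\|\pi_t)^2 \le I(\mu_t\|\pi_t)\,W_2^2(\mu_t,\pi_t)$ from displacement convexity of the relative entropy along Wasserstein geodesics (citing \citet{ambrosio2005gradient}) together with Cauchy--Schwarz, which is exactly the $K=0$ HWI inequality you invoke, and then combines it with Lemma~\ref{lemma:basic} and integrates the resulting Bernoulli-type differential inequality. Your ``secondary minor issue'' about the Wasserstein bound being preserved is not actually an issue here, since $W_2^2(\mu_t,\pi_t)\le C$ for all $t\in[0,T]$ is an explicit hypothesis of the proposition.
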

\begin{proof}
Because of the log-concavity of $\pi_t$, one can show the convexity of $D_{\mathrm{KL}}(\cdot || \pi_t)$ along Wasserstein geodesics (see Theorem 9.4.11 in \citet{ambrosio2005gradient}), which follows that
\begin{equation}\nonumber
     0= D_{\mathrm{KL}}(\pi_t || \pi_t) \ge D_{\mathrm{KL}}(\mu_t || \pi_t) + \bbE_{x_t,y_t \sim \pi^*(\mu_t,\pi_t)}\langle \nabla \log \frac{\mu_t}{\pi_t}(x_t), x_t-y_t  \rangle .
\end{equation} where $\pi^*(\mu_t,\pi_t)$ is the optimal coupling in the sense of 2-Wasserstein distance.
Hence,
\begin{equation}\nonumber
     D_{\mathrm{KL}}(\mu_t || \pi_t)^2 \le \bbE_{\mu_t}\|\nabla \log \frac{\mu_t}{\pi_t}\|_2^2 W_2^2(\mu_t,\pi_t) \\
     = I(\mu_t || \pi_t)W_2^2(\mu_t,\pi_t).
\end{equation}
Combining with \eqref{eq:dataprocessing3} yields
    \begin{equation}\nonumber
        \frac{\partial D_{\mathrm{KL}}(\mu_t || \pi_t) }{\partial t}   \leq -\frac{\lambda_t^2}{2} \frac{D_{\mathrm{KL}}(\mu_t || \pi_t)^2}{W_2^2(\mu_t,\pi_t)} \leq -\frac{\lambda_t^2D_{\mathrm{KL}}(\mu_t || \pi_t)^2}{2C},
    \end{equation}
    which gives 
    \begin{equation}\nonumber
      \frac{1}{D_{\mathrm{KL}}(\mu_T || \pi_T)}  \ge \frac{1}{D_{\mathrm{KL}}(\mu_0 || \pi_0)}+\frac{\int_0^T \lambda_t^2dt}{2C}.
    \end{equation}
\end{proof}
We are ready to show the proof of Theorem \ref{theorem:convexity}.
\begin{proof}
We construct the diffusion process for both $(\pi^X,\pi^Y)$ and $(\pi_k^X,\pi_k^Y)$ as in the proof of \ref{theorem:twonode},
\begin{equation}\label{eq:Pro19_1}
        \rd Z_t = -\eta \nabla \log \phi_t (Z_t) \rd t + \sqrt{\eta} \rd W_t.
\end{equation}
Then we have the explicit expression of $\pi_t$, the distribution of $Z_t$ given $\pi_0 =\pi^X$, satisfies,
\begin{equation}\nonumber
   \pi_t \propto \int \exp[-g(y)-\frac{1}{2\eta(1-t)}\Vert z-y \Vert^2_2] \rd y \int \exp[-f(x)-\frac{1}{2\eta t}\Vert z-x \Vert^2_2] \rd x. 
\end{equation}
Notice that the drift term $ -\eta\nabla \log \phi_t$ in \eqref{eq:Pro19_1} is convex. Thus, with $W_2^2$-contraction , we have 
\begin{equation}\nonumber
    W_2^2(\mu_t,\pi_t) \le W_2^2(\mu_0,\pi_0) = W_2^2(\mu_0^{X},\pi^{X}).
\end{equation}
As $\pi_t$ is convex, which is due to the convexity assumption for $f$ and $g$ (see Theorem 3.3 in \citet{saumard2014log}),  we can apply Theorem  \ref{thm:dataprocessing2} with $C \equiv W_2^2(\mu_0^{X},\pi^{X})$ on \eqref{eq:Pro19_1}, which yields
\begin{equation}\label{eq:Prop_2}
    \frac{1}{D_{\mathrm{KL}}(\mu_k^Y ||\pi^Y)}  \ge \frac{1}{D_{\mathrm{KL}}(\mu_k^{X} || \pi^{X})}+\frac{\eta}{2 W_2^2(\mu_0^X,\pi^X)}.
\end{equation}
We can analyze Step 2 in Algorithm \ref{algo:algo1} with the same framework, which gives
\begin{equation}\label{eq:Prop_3}
    \frac{1}{D_{\mathrm{KL}}(\mu_{k+1}^X ||\pi^X)}  \ge \frac{1}{D_{\mathrm{KL}}(\mu_{k}^Y || \pi^Y)}+\frac{\eta}{2 W_2^2(\mu_0^X,\pi^X)}.
\end{equation}
With \eqref{eq:Prop_2}, \eqref{eq:Prop_3} and Lemma \ref{lemma:KL}, one has for any $k$
    \begin{equation}\nonumber
        \frac{1}{D_{\mathrm{KL}}(\mu^{XY}_k || \pi^{XY})}  \ge \frac{1}{D_{\mathrm{KL}}(\mu_0^X || \pi^{X})}+\frac{k\eta}{W_2^2(\mu_0^X,\pi^X)}.
    \end{equation}
It follows that 
    \begin{equation}\nonumber
        D_{\mathrm{KL}}(\mu^{XY}_k || \pi^{XY}) \le \frac{W_2^2(\mu_0^X,\pi^X)}{k\eta}.
    \end{equation}
\end{proof}

\section{Non-asymptotic convergence rate to composite distributions}\label{appendix:small_eta}

\subsection{Proof of Example \ref{ex:ex2}}\label{subappendix:ex2}

\begin{proof}
Note that with the condition $f=0$, $\pi^X_\eta$ = $\nu * \mN(0,\eta\ide)$. Thus, $\forall t \in (0,\eta]$, we define $\pi_t = \pi_0 * \mN(0,t\ide)$. Then it is sufficient to bound the distance between $\pi_t$ and $\pi_0$. By definition, $\frac{\partial \pi_t}{\partial t} = \frac{1}{2}\Delta \pi_t$. It follows that 
\begin{align}\nonumber
    \begin{split}
    \frac{\partial D_{\mathrm{KL}}(\pi_t|| \pi_0)}{\partial t} 
    & =  \int \frac{\partial \pi_t}{\partial t} + \log \frac{\pi_t}{\pi_0}\frac{\partial \pi_t}{\partial t} \\
    & = -\frac{1}{2}\int \pi_t \langle \nabla \log \pi_t, \nabla \log \frac{\pi_t}{\pi_0}\rangle \\
    & = -\frac{1}{2}\int \pi_t \|\nabla \log \frac{\pi_t}{\pi_0}\|^2_2  - \frac{1}{2}\int \pi_t \langle \nabla \log \pi_0, \nabla \log \frac{\pi_t}{\pi_0} \rangle \\
    & = -\frac{1}{2}\int \pi_t \|\nabla \log \frac{\pi_t}{\pi_0}\|^2_2 + \frac{1}{2}\int \pi_t  \|\nabla \log \pi_0\|^2 - \frac{1}{2}\int \langle  \nabla \log \pi_0, \nabla \pi_t\rangle \\
    & \leq 0 + \frac{1}{2}\mathbb{E}_{\pi_t}(\|\nabla \log \pi_0\|^2_2+ \Delta \log \pi_0) 
   \end{split}
\end{align}
With the assumption that $g$ is $\alpha$-strongly convex and $\beta$-smooth, basing on  Lemma 12 in \cite{vempala2019rapid}, we could obtain
\begin{align}\nonumber
    \begin{split}
         \frac{\partial D_{\mathrm{KL}}(\pi_t|| \pi_0)}{\partial t}  
         & \leq \frac{1}{2}\mathbb{E}_{\pi_t}(\|\nabla \log \pi_0\|^2+ \Delta \log \pi_0)   \\ 
         & \leq \frac{2\beta^2}{\alpha}D_{\mathrm{KL}}(\pi_t|| \pi_0) +d\beta. 
    \end{split}
\end{align}
Solving this differential inequality with the boundary condition $D_{\mathrm{KL}}(\pi_0|| \pi_0) = 0$ yields 
\begin{equation}\nonumber
    D_{\mathrm{KL}}(\pi_t|| \pi_0) \leq \frac{d\alpha}{2\beta}\left[\exp\left(\frac{2\beta^2t}{\alpha}\right)-1\right].
\end{equation}
By Pinsker’s inequality, we finally have
\begin{equation}\nonumber
    {\mathrm{TV}}(\pi_0, \pi_t) \leq \sqrt{\frac{d\alpha}{4\beta}\left[\exp\left(\frac{2\beta^2t}{\alpha}\right)-1\right]}.
\end{equation}
\end{proof}

\subsection{Proof of Proposition \ref{proposition:quadratic} }\label{subappendix:ex4}

\begin{proof}
For  ease of notation, in the proof, for any function $h_t$, we denote its first-order  and second-order time derivative by $\dot h_t$ and $\ddot h_t$, respectively.

For $t\in (0,\eta]$, let 
\begin{align}\nonumber
    \begin{split}
        \pi_t  &= \exp\left[-f(x)-g_t(x)-\phi_t\right], \\
        \pi_0 & = \lim_{t \rightarrow 0}\pi_t = \exp\left[-f(x)-g(x)-\phi_0\right]
    \end{split}
\end{align}
where 
\begin{align}\nonumber
    \begin{split}
    g_t(x) & = -\log\int \exp\left[-g(y)-\frac{1}{2t}\|x-y\|^2\right] \rd y, \\
    \phi_t & =\log\int \exp\left[-f(x)-g_t(x)\right] \rd x, \\
    \phi_0 & =\log\int \exp\left[-f(x)-g(x)\right] \rd x.
    \end{split}
\end{align}
It follows that the time derivatives of their terms are  
\begin{align}\label{eq:eq4}
    \begin{split}
        \dot \phi_t & =  -\mathbb{E}_{\pi_t}(\dot g_t) \\
        \dot \pi_t &= -\pi_t\left[\dot g_t+\dot \phi_t\right]
    \end{split}
\end{align}
And
\begin{equation}\label{eq:eq5}
\dot g_t = -\frac{1}{2t^2}\mathbb{E}_{r_t(.|x)}\left(\|x-y\|^2\right)
\end{equation}
with
\begin{equation}\label{eq:eq6}
r_t(.|x) = \exp\left[-g(y)-\frac{1}{2t}\|x-y\|^2+g_t(x)\right].
\end{equation}
Combining with \eqref{eq:eq4} and \eqref{eq:eq5}, we have
\begin{align}\label{eq:k_t}
    \begin{split}
        K(t):= \frac{\partial D_{\mathrm{KL}}(\pi_0|| \pi_t)}{\partial t} &= \int \pi_0(\dot g_t+\dot \phi_t) \\
        &= \mathbb{E}_{\pi_0}(\dot g_t) -\mathbb{E}_{\pi_t}(\dot g_t) \\
        &= \frac{1}{2t^2}\mathbb{E}_{\pi_t}\left[\mathbb{E}_{r_t(.|x)}\left(\|x-y\|^2\right)\right]- \frac{1}{2t^2}\mathbb{E}_{\pi_0}\left[\mathbb{E}_{r_t(.|x)}\left(\|x-y\|^2\right)\right].
    \end{split}
\end{align} 
To perform a more detailed analysis, we consider the second-order time derivative of $ D_{\mathrm{KL}}(\pi_0|| \pi_t)$, which satisfies
\begin{equation}\nonumber
\dot{K}(t) =  \int \pi_0\ddot g_t- \int \pi_t\ddot g_t -\int \dot \pi_t \dot g_t .
\end{equation}
Plugging \eqref{eq:eq4} into $\int \dot \pi_t \dot g_t$ yields
\begin{equation}\label{eq:eq7}
    \int \dot \pi_t \dot g_t = -\Var_{\pi_t}(\dot g_t ).
\end{equation}
Based on \eqref{eq:eq5} and \eqref{eq:eq6}, we have
\begin{equation}\label{eq:ddot_g_t}
    \ddot g_t = -\frac{1}{4t^4}\Var_{r_t(.|x)}[\|x-y\|^2] + \frac{1}{t^3}\mathbb{E}_{r_t(.|x)}\left[\|x-y\|^2\right].
\end{equation}
Hence, with \eqref{eq:eq5}, \eqref{eq:eq7}  and \eqref{eq:ddot_g_t},
\begin{align}\label{eq:dot_k_t}
    \begin{split}
        \dot{K}(t) =&  \mathbb{E}_{\pi_0}\left[\frac{1}{t^3}\mathbb{E}_{r_t(,|x)}\|x-y\|^2-\frac{1}{4t^4}\Var_{r_t(,|x)}\|x-y\|^2\right] \\
        &- \mathbb{E}_{\pi_t}\left[\frac{1}{t^3}\mathbb{E}_{r_t(,|x)}\|x-y\|^2-\frac{1}{4t^4}\Var_{r_t(,|x)}\|x-y\|^2\right] \\
        &+ \frac{1}{4t^4}\Var_{\pi_t}\left[\mathbb{E}_{r_t(,|x)}\|x-y\|^2\right].
    \end{split}
\end{align}
In what follows, we plug in  $g(y) = \frac{1}{2\sigma^2}\|y-u\|^2_2$. From \eqref{eq:eq6},
\begin{align}
    r_t(.|x) & = \exp\left[-g(y)-\frac{1}{2t}\|x-y\|^2+g_t(x)\right] \nonumber\\
    &= \mathcal{N}\left(t(\sigma^2+t)^{-1}u+\sigma^2(\sigma^2+t)^{-1}x, t\sigma^2(\sigma^2+t)^{-1}\right).
\end{align}\nonumber
%Note that $t\sigma^2(\sigma^2+t\ide)^{-1} = (\frac{\ide}{t}+\sigma^2^{-1})^{-1}$.
Hence, 
\begin{align}
    \begin{split}
    \bbE_{r_t(,|x)}\|x-y\|^2 & = \|t(\sigma^2+t)^{-1}(u-x)\|^2+  t\sigma^2(\sigma^2+t)^{-1}d \\
    \Var_{r_t(,|x)}\|x-y\|^2 &= 4t\sigma^2(\sigma^2+t)^{-1}\|t(\sigma^2+t)^{-1}(u-x)\|^2 + 2t^2\sigma^2(\sigma^2+t)^{-2}d.
    \end{split}
\end{align} 
It follows that
\begin{align}
    \begin{split}
        &\quad \mathbb{E}_{\pi_0}\left[-\frac{1}{4t^4}\Var_{r_t(,|x)}\|x-y\|^2\right] - \mathbb{E}_{\pi_t}\left[-\frac{1}{4t^4}\Var_{r_t(,|x)}\|x-y\|^2\right] \\
        & = -\frac{\sigma^2(\sigma^2+t)^{-1}}{t^3}\left(\bbE_{\pi_0}[\mathbb{E}_{r_t(,|x)}\|x-y\|^2]-\bbE_{\pi_t}[\mathbb{E}_{r_t(,|x)}\|x-y\|^2]\right) \\
        & = \frac{2\sigma^2(\sigma^2+t)^{-1}}{t}K(t),
    \end{split}
\end{align}
and 
\begin{align}
    \begin{split}
       K(t) &= \frac{1}{2t^2}\mathbb{E}_{\pi_t}\left[\mathbb{E}_{r_t(.|x)}\left(\|x-y\|^2\right)\right]- \frac{1}{2t^2}\mathbb{E}_{\pi_0}\left[\mathbb{E}_{r_t(.|x)}\left(\|x-y\|^2\right)\right] \\
        &= \frac{1}{2}\bbE_{\pi_t}[ \|(\sigma^2+t)^{-1}(u-x)\|^2]-\frac{1}{2}\bbE_{\pi_0}[ \|(\sigma^2+t)^{-1}(u-x)\|^2]
    \end{split}
\end{align}
which implies $\lim_{t\rightarrow 0}K(t) = 0$ under mild conditions.
With \eqref{eq:k_t} and \eqref{eq:dot_k_t}, we have
\begin{align}
    \begin{split}
        \dot{K}(t) &\leq  -\frac{2}{\sigma^2+t}K(t) +  \frac{1}{4t^4}\Var_{\pi_t}\left[\|t(\sigma^2+t)^{-1}(u-x)\|^2\right].
    \end{split}
\end{align}
We also assume $f(x)$ is $\alpha_f$-strongly convex, so $\pi_t$ is $\alpha_f+\frac{1}{\sigma^2+t}$-strongly log-concave. Thus, $\pi_t$ satisfies $\alpha_f+\frac{1}{\sigma^2+t}$-Poincar\'e inequality. This follows that
\begin{equation}\nonumber
    \frac{1}{4t^4}\Var_{\pi_t}\left[\|t(\sigma^2+t)^{-1}(u-x)\|^2\right] \le \frac{1}{(\sigma^2+t)^4(\alpha_f+\frac{1}{\sigma^2+t})}\bbE_{\pi_t}\|x-u\|_2^2.
\end{equation}
As we assume the minimizer of $f(x)$ is $u$, the potential of $\pi_t$ is minimized at $u$ as well. By Theorem 1 in \citet{durmus2016high}, we have
\begin{equation}\nonumber
    \bbE_{\pi_t}\|x-u\|_2^2 \le \frac{d}{\alpha_f+\frac{1}{\sigma^2+t}}.
\end{equation}
Combining all the above inequality yields
\begin{equation}\nonumber
    \dot{K}(t) \leq  -\frac{2}{\sigma^2+t}K(t) + \frac{d}{(\sigma^2+t)^4(\alpha_f+\frac{1}{\sigma^2+t})^2} \leq -\frac{2}{\sigma^2+t}K(t) + \frac{d}{(\alpha_f\sigma^4+\sigma^2)^2}
\end{equation}
with the boundary condition $\lim_{t \rightarrow 0}K(t)=0$.
Solving the previous inequalities gives 
\begin{equation}\nonumber
K(t) \leq \frac{dt}{3(\alpha_f\sigma^4+\sigma^2)^2}\left(1+\left(\frac{\sigma^2}{\sigma^2+t}\right)^2+\frac{\sigma^2}{\sigma^2+t}\right) \le\frac{dt}{(\alpha_f\sigma^4+\sigma^2)^2}.
\end{equation}
Thus, we have
\begin{equation}\nonumber
     D_{\mathrm{KL}}(\pi_0|| \pi_{t}) \le \frac{dt^2}{2(\alpha_f\sigma^4+\sigma^2)^2}.
\end{equation}
By Pinsker’s inequality, we  have
\begin{equation}\nonumber
    {\mathrm{TV}}(\pi_0, \pi_t) \leq \frac{t\sqrt{d}}{2(\alpha_f\sigma^4+\sigma^2)}.
\end{equation}
\end{proof}

\end{document}